\documentclass[oneside,english]{amsart}
\usepackage[T1]{fontenc}
\usepackage[latin9]{inputenc}
\usepackage{amsthm}
\usepackage{amstext}
\usepackage{amssymb}
\usepackage{esint}
\usepackage{amscd}
\usepackage{mathrsfs}
\usepackage{color}
\makeatletter
\numberwithin{equation}{section}
\numberwithin{figure}{section}
\theoremstyle{plain}
\newtheorem{thm}{\protect\theoremname}[section]

\theoremstyle{plain}
\theoremstyle{definition}

\theoremstyle{plain}
\newtheorem{lem}[thm]{\protect\lemmaname}
\newtheorem{cor}[thm]{\protect\corollaryname}
\theoremstyle{plain}

\theoremstyle{plain}
\makeatother

\usepackage{babel}
\providecommand{\definitionname}{Definition}
\providecommand{\lemmaname}{Lemma}
\providecommand{\theoremname}{Theorem}
\providecommand{\corollaryname}{Corollary}
\providecommand{\remarkname}{Remark}
\providecommand{\propositionname}{Proposition}
	
\DeclareMathOperator{\loc}{loc}
\DeclareMathOperator{\supp}{supp}

\DeclareMathOperator{\diam}{diam}

\DeclareMathOperator{\ess}{ess}

\DeclareMathOperator{\cp}{cap}
\DeclareMathOperator{\Cp}{Cap}

\DeclareMathOperator{\Lip}{Lip}
\DeclareMathOperator{\md}{mod}

\begin{document}

\title[On mappings generating embedding operators in Sobolev classes]{On mappings generating embedding operators in Sobolev classes on metric measure spaces}

\author{Alexander Menovschikov and Alexander Ukhlov}
\begin{abstract}
In this article, we study homeomorphisms $\varphi: \Omega \to \widetilde{\Omega}$ that generate embedding operators in Sobolev classes on metric measure spaces $X$ by the composition rule $\varphi^{\ast}(f)=f\circ\varphi$. In turn, this leads to Sobolev type embedding theorems for a wide class of bounded domains $\widetilde{\Omega}\subset X$.
\end{abstract}
\maketitle
\footnotetext{\textbf{Key words and phrases:} Sobolev spaces, composition operators, metric measure spaces, } 
\footnotetext{\textbf{2020
Mathematics Subject Classification:} 46E35, 30C65, 30L15.}

\section{Introduction}

Let $\Omega, \widetilde{\Omega} \subset \mathbb{R}^n$, $n \geq 2$, be Euclidean domains. Then a homeomorphism $\varphi: \Omega \to \widetilde{\Omega}$ generates a bounded embedding operator on seminormed Sobolev spaces
$$
\varphi^{\ast}: L^1_p(\widetilde{\Omega}) \to L^1_q(\Omega), \quad 1 < q \leq p < \infty,
$$
by the composition rule $\varphi^{\ast}(f) = f \circ \varphi$, if there exists a constant $C_{p,q}$ such that the inequality
$$
\|\varphi^{\ast}(f)\|_{L^1_q(\Omega)} \leq C_{p,q} \|f\|_{L^1_p(\widetilde{\Omega})}
$$
holds for any function $f \in L^1_p(\widetilde{\Omega})$ \cite{VU04, VU05}. In accordance with the non-linear potential theory \cite{HM72}, we consider functions of Sobolev spaces defined up to a set of $p$-capacity zero.

The embedding operators on Sobolev spaces, generated by the composition rule $\varphi^\ast(f) = f \circ \varphi$, trace back to the works \cite{M69, Sl41, VGR79}. These operators have a significant role in the theory of Sobolev embeddings \cite{GGu, GU} and in the spectral theory of elliptic operators \cite{GPU18, GU17}. In \cite{U93, VU02} (see, also \cite{V88} for the case $p=q$), necessary and sufficient conditions were obtained for mappings of Euclidean domains $\varphi: \Omega \to \widetilde{\Omega}$ that generate bounded composition operators on Sobolev spaces:
$$
\varphi^{\ast}: L^1_p(\widetilde{\Omega}) \to L^1_q(\Omega), \quad 1 < q \leq p < \infty.
$$

The foundations of the geometric theory of composition operators on Sobolev spaces were established in \cite{VU04, VU05}. Over the last decade, this theory has continued to develop intensively, as seen in \cite{MU24_1, MU24_2, T15, V20,VP24}.

The composition operators on Sobolev spaces defined on stratified Lie groups \cite{FS} were considered in \cite{VU98, VU04, VU05} within the framework of the quasiconformal mappings theory on spaces with non-Riemannian metrics \cite{KR95, Pa}.

The Sobolev spaces associated with non-Riemannian metrics arise in the study of elliptic boundary value problems \cite{Ho67} and have been intensively studied in recent decades, see for example, \cite{BLU,N01}. To study the pointwise behavior of Sobolev functions, a non-coordinate approach to the Sobolev spaces theory was suggested in \cite{H93, V96}. In subsequent works \cite{HK00, HKST, Sh00}, the theory of Sobolev spaces on metric measure spaces was constructed. The variational problems on metric measure spaces see, for example, in \cite{GK}.

Let $X = (X, \rho, \mu)$ be a doubling metric measure space which supports the \textit{weak $p$-Poincar\'e inequality} \cite{HK00}.  In this article, using methods of non-linear potential theory \cite{HM72}, we consider bi-measurable homeomorphisms $\varphi: \Omega \to \widetilde\Omega$, of bounded domains $\Omega,\Omega\subset X$, which generate bounded composition operators on Newtonian--Sobolev spaces
$$
N^{1,p}(\widetilde{\Omega}) \to N^{1,q}(\Omega), \quad 1 < q \leq p < \infty.
$$

We prove the Luzin $N^{-1}$-property of such mappings with respect to capacities and obtain necessary and sufficient conditions on bi-measurable homeomorphisms generating bounded composition operators on Newtonian--Sobolev spaces.

The main problem that we solve is the characterization of bi-measurable homeomorphisms, which generate bounded composition operators on Newtonian--Sobolev spaces, into the corresponding Reshetnyak-Sobolev classes. We prove this by using special test functions generated by distance functions. In the recent work \cite{Ro23}, bi-composition operators on Newtonian--Sobolev spaces were considered. However, in that work, the author used the definition of Sobolev classes of mappings suggested in \cite{GU10, U04} for mappings in Euclidean space. The equivalence of this definition to other definitions of Sobolev classes in general metric structures, such as Carnot groups, remains an open problem.

On the base of the composition operators we consider Sobolev type embedding theorems in weak $(p,q)$-quasiconformal $\alpha$-regular domains $\widetilde{\Omega}\subset X$, $1<q\leq p<\infty$, $\alpha > 1$, which are defined in Section 4.3. The first result states:

\vskip 0.1cm
\noindent
\textit{Let $\widetilde{\Omega}\subset X = (X, \rho, \mu)$ be a weak $(p,q)$-quasiconformal $\alpha$-regular domain, $1<q<\nu$. Then the embedding operator
 $$
    \widetilde{i}: N^{1,p}(\widetilde{\Omega}) \to L^{s}(\widetilde\Omega)
 $$
 is bounded for $s\leq r\frac{\alpha-1}{\alpha}$, $r=\frac{\nu q}{\nu-q}$, and is compact for $s< r\frac{\alpha-1}{\alpha}$, where $\nu > 0$ is  the order of a relative lower volume decay of $X$
}
\vskip 0.1cm

In particular, let us formulate the embedding theorem in the case of Lipschitz domains.

\vskip 0.1cm
\noindent
\textit{Let $\widetilde{\Omega}\subset X = (X, \rho, \mu)$ be a Lipschitz domain. Then for $1<p<\nu$, the embedding operator
 $$
    \widetilde{i}: N^{1,p}(\widetilde{\Omega}) \to L^{s}(\widetilde\Omega)
 $$
 is bounded for $s\leq r$, $r=\frac{\nu p}{\nu-p}$, and is compact for $s< r$.}
\vskip 0.1cm

In metric measure spaces we call a domain $\widetilde{\Omega}\subset X = (X, \rho, \mu)$ as a Lipschitz domain, if there exists a bi-Lipschitz homeomorphism $\varphi: \widetilde{B} \to \widetilde{\Omega}$, where $\widetilde{B}$ is the unit ball in $X$.

\section{Sobolev classes on metric measure spaces}

\subsection{Sobolev functions on metric measure spaces}

Given a metric space $X=(X, \rho)$, we consider a \textit{doubling measure} $\mu$, i.e. a Borel regular measure, such that each ball in $X$ has a positive and finite measure and the inequality
$$
 \mu(B(x,2r)) \leq C_{\mu} \mu(B(x,r))
$$
holds for all $x \in X$ and $r>0$ with a (smallest) constant $C_{\mu} \geq 1$. Then $X = (X, \rho, \mu)$ is called a \textit{doubling metric measure space}.

A metric measure space $X = (X, \rho, \mu)$ is said to satisfy a \textit{relative lower volume decay} of order $\nu > 0$ if there is a constant $C \geq 1$ such that
$$
	\frac{\mu(B)}{\mu(B_0)} \geq C \Big(\frac{r}{r_0}\Big)^\nu
$$
whenever $B_0$ is an arbitrary ball of radius $r_0$ and $B=B(x,r)$, $x \in B_0$, $r \leq r_0$. It was shown in \cite[Lemma 8.1.13]{HKST}, that doubling metric measure spaces support this property with
$$
    \nu = \log_2 C_\mu.
$$

Let $\Gamma$ be a family of curves in $(X, \rho, \mu)$. Denote by $adm(\Gamma)$ the set of Borel functions (\textit{admissible functions}) $\eta: X \to [0,\infty]$ such that the inequality
$$
\int\limits_{\gamma}\eta \, ds \geq 1
$$
holds for locally rectifiable curves $\gamma\in\Gamma$.

The quantity
$$
\md_p(\Gamma) = \inf\int\limits_{X} \eta^{p} \, d\mu
$$
is called the \textit{$p$-module of the family of curves} $\Gamma$ \cite{MRSY09}. The infimum is taken over all admissible functions $\eta \in adm(\Gamma)$.

Recall the notion of Sobolev spaces on doubling metric measure spaces \cite{HK00,Sh00}. The \textit{Newtonian-Sobolev space} $N^{1,p}(X)$, $1<p<\infty$, is defined as a space of measurable functions $f : X \to \mathbb{R}$ such that $f \in L^p(X)$ and there exists a Borel function $g: X \to [0, \infty]$ such that $g \in L^p(X)$ and 
\begin{equation}\label{Newtoneq}
|f(\gamma(a))-f(\gamma(b))| \leq \int_\gamma g \, ds
\end{equation}
for $p$-a.e. rectifiable curve $\gamma: [a,b] \to X$, where the integral on $\gamma$ denotes the line integral of $g$ along $\gamma$ and $p$-a.e. means that the property holds for all curves except a family $\Gamma$ with $\md_p \Gamma = 0$ \cite{MRSY09}.

The function $g$ which satisfies \eqref{Newtoneq} for $p$-a.e. rectifiable curve $\gamma: [a,b] \to X$ is called a \textit{$p$-weak upper gradient of $f$} and is denoted as $\nabla_{up} f$. If a weak upper gradient $g$ satisfies \eqref{Newtoneq} for every rectifiable curve $\gamma: [a,b] \to X$, then it is called an \textit{upper gradient of $f$}. By Mazur's lemma and by Fuglede's lemma \cite{Sh00} there exists a unique \textit{minimal $p$-weak upper gradient $\nabla_{min} f \in L^p(X)$}, $1<p<\infty$, defined up to a set of measure zero.

The Newtonian-Sobolev space $N^{1,p}(X)$ is a Banach space with the norm
$$
\|f\|_{N^{1,p}(X)} = \|f\|_{L^p(X)} + \inf\limits_{\nabla_{up} f}\|\nabla_{up} f\|_{L^p(X)},
$$
where the infimum is taken over all $p$-weak upper gradients (equivalently, over all upper gradients). This norm is equivalent to the norm
$$
\|f\|_{N^{1,p}(X)} = \|f\|_{L^p(X)} + \|\nabla_{min} f\|_{L^p(X)}, \quad 1<p<\infty.
$$

Let $X$ be a doubling metric measure space. Then every function $f$ that belongs to $N^{1,p}(X)$ has a representative that is well-defined up to a set of a Sobolev $p$-capacity zero \cite{GT02,KM96}. 

Recall the notion of the $p$-capacity of a set $E\subset X$ \cite{BB,GT02}. Suppose $F\subset X$ is a compact set. The \textit{$p$-capacity of $F$} is defined as
\begin{equation*}
 \Cp_p(F;X) = \inf\left\{ \int_X |u|^p\,d\mu + \int_X (\nabla_{min} u)^p\,d\mu \right\},
\end{equation*} 
where the infimum is taken over all functions $u\in C_0(X)\cap N^{1,p}(X)$ such that $u\geq 1$ on $F$. These functions $u$ are called \textit{admissible functions} for the compact set $F\subset X$. 
If 
$U\subset X$ 
is an open set, we define
\begin{equation*}
\Cp_{p}(U;\Omega)=\sup_F\{\Cp_{p}
(F;X)\,:\,F\subset U,\,\, F\,\,\text{is compact}\}.
\end{equation*}

In the case of an arbitrary set 
$E\subset X$
we define the inner $p$-capacity 
\begin{equation*}
\underline{\Cp}_{p}(E;X)=\sup_F\{\Cp_{p}(F;X)\, :\,\,F\subset E\subset X,\,\, F\,\,\text{is compact}\},
\end{equation*}
and the outer $p$-capacity 
\begin{equation*}
\overline{\Cp}_{p}(E;X)=\inf_U\{\Cp_{p}(U;X)\, :\,\,E\subset U\subset X,\,\, U\,\,\text{is open}\}.
\end{equation*}

Then a set $E\subset X$ is called $p$-capacity measurable, if 
$$
\underline{\Cp}_p(E;X)=\overline{\Cp}_p(E;X).
$$
Let $E\subset X$ be a $p$-capacity measurable set. The value
$$
\Cp_p(E; X)=\underline{\Cp}_p(E; X)=\overline{\Cp}_p(E;X)
$$
is called the $p$-capacity measure of the set $E\subset X$. By \cite{Ch54}, Borel sets are $p$-capacity measurable sets.

One of the important property of metric measure spaces is supporting of Poincar\'e inequality. The metric measure space $X = (X, \rho, \mu)$ supports the \textit{weak $p$-Poincar\'e inequality} \cite{HK00}, if for all balls $B \subset X$, for all functions $f \in L^1(\sigma B)$, $\sigma>1$, and for all $p$-weak upper gradients $\nabla_{up} f$ of $f$ in the ball $\sigma B$ the following inequality holds:
    \begin{equation}\label{Poincareineq}
        \frac{1}{\mu(B)} \int_B |f - f_B| \, d\mu \leq C_p (\diam(B))\left(\frac{1}{\mu(\sigma B)} \int_{\sigma B} (\nabla_{up} f)^p \, d\mu\right)^{\frac{1}{p}}
    \end{equation}
with the constant $C_p$ that does not depend on $B$, $f$, and $\nabla_{up} f$. 

If $X$ is a doubling metric measure space that supports the weak $p$-Poincar\'e inequality, then for every function $f\in N^{1,p}(X)$ there exists a quasicontinuous representative $\bar{f}$ of this function (see \cite{HKST}) and Lipschitz functions are dense in $N^{1,p}$ \cite{H96,Sh00}.

Let $\Omega \subset X$ be a bounded domain. We consider $\Omega$ as a doubling metric measure space with the metric $\rho$ and measure $\mu$ induced from $X$. The \textit{seminormed Newtonian-Sobolev space} $S^{1,p}(\Omega)$, for $1 < p < \infty$, is defined as a space of measurable functions $f : \Omega \to \mathbb{R}$ such that $f \in L^1_{\loc}(\Omega)$, and there exists a minimal $p$-weak upper gradient $\nabla_{min} f \in L^p(\Omega)$, with the finite seminorm:
    $$
        \|f\|_{S^{1,p}(\Omega)} = \|\nabla_{min} f\|_{L^p(\Omega)}.
    $$

It has been proved \cite[Theorem 9.1.15]{HKST} that a doubling metric measure space $X = (X, \rho, \mu)$, which supports a weak $p$-Poincar\'e inequality, also supports the standard Sobolev embeddings on balls. Consequently,
$$
N^{1,p}(B) = S^{1,p}(B), \,\,1<p<\infty,
$$
for any ball $B$ in $X = (X, \rho, \mu)$.

 Moreover, we can consider the $p$-capacity based on the seminorm
    $$
        \cp_p(F;\Omega) = \inf \int_{\Omega} (\nabla_{min} u)^p\,d\mu = \inf \|u\|^p_{S^{1,p}(\Omega)},
    $$
where the infimum is taken over all admissible functions $u \in N^{1,p}(\Omega)$ such that $u \geq 1$ on the compact set $F$. According to \cite{VCh1,VCh2}, this capacity is also a Choquet capacity \cite{Ch54} and in doubling metric measure space $X$ this two capacities have the same null-sets, i.e. 
$$
\Cp_p(E;\Omega)=0\,\,\text{if and only if}\,\, \cp_p(E;\Omega)=0.
$$
In it turns this implies that in $S^{1,p}(\Omega)$ every function also has a quasicontinuous representative and Lipschitz functions are dense in $S^{1,p}(\Omega)$.

\subsection{Sobolev mappings on metric measure spaces}

Let us recall the definition of Sobolev mappings between metric measure spaces \cite{R97}. Let $X=(X, \rho, \mu)$ and $\widetilde{X}=(\widetilde{X}, \widetilde{\rho}, \widetilde{\mu})$ be complete doubling metric measure spaces, $\mu(X)<\infty$. We consider bi-measurable mappings in the sense of \cite{Z69} 
$$
\varphi: X \to \widetilde{X},
$$
where a bi-measurability means that the mapping $\varphi$ has the Luzin $N$-property and the Luzin $N^{-1}$-property. 
Recall that a mapping $\varphi: X \to \widetilde{X}$ has the Luzin $N$-property if an image of a set of $\mu$-measure zero has $\widetilde{\mu}$-measure zero and has the Luzin $N^{-1}$-property if a pre-image of a set of $\widetilde{\mu}$-measure zero has $\mu$-measure zero.

The class $L^p(X; \widetilde{X})$, $1 < p < \infty$, denotes bi-measurable mappings $\varphi: X \to \widetilde{X}$, such that functions $[\varphi]_z := \widetilde{\rho}(\varphi(x), z) \in L^p(X)$ for a fixed point $z \in \widetilde{X}$. This definition is independent of the choice of the point $z\in \widetilde{X}$.

There are several equivalent definitions of the Sobolev mappings with values in metric spaces. The first approach is based on compositions with Lipschitz functions \cite{R97,R06,VU98}. 
A function $f: \widetilde{X}\to\mathbb R$ belongs to the Lipschitz space $\Lip(\widetilde{X})$ if 
    $$
        \|f\|_{\Lip(\widetilde{X})}=\sup\limits_{x,y\in \widetilde{X}}\frac{|f(x)-f(y)|}{\widetilde{\rho}(x,y)}<\infty.
    $$

Now let $\Omega$ and $\widetilde{\Omega}$ be bounded domains in the Euclidean space $\mathbb R^n$. If a bi-measurable mapping $\varphi:\Omega\to\widetilde{\Omega}$ belongs to the Sobolev class $W^1_p(\Omega;\widetilde{\Omega})$, then the chain rule
\begin{equation*}
\frac{\partial \left(f\circ\varphi\right)}{\partial x_i}(x)=\sum\limits_{j=1}^n \frac{\partial f}{\partial y_j}(\varphi(x)) \frac{\partial \varphi_j}{\partial x_i}(x)\,\,\text{a.e. in}\,\,\Omega,\,\,i=1,...,n,
\end{equation*}
holds for every function $f\in \Lip(\widetilde{\Omega})$ and weak derivatives $\frac{\partial \left(f\circ\varphi\right)}{\partial x_i}$, $i=1,...,n$, belong to the Lebesgue space $L^p(\Omega)$  \cite{AM,VGR79}. Placing this property in the base of the definition of Sobolev metric-valued mappings, we come to the following definition \cite{R97,R06,VU98}.

The \textit{Reshetnyak-Sobolev class} $V^{1,p}(X;\widetilde{X})$, $1 < p < \infty$, is a class of bi-measurable mappings $\varphi\in L^p(X; \widetilde{X})$ such that:

\noindent
$(1)$ for every point  $z \in \widetilde{X}$ the function $[\varphi]_{z}(x) = \widetilde{\rho}(\varphi(x), z)$ belongs to the Newtonian-Sobolev space $N^{1,p}(X)$;

\noindent
$(2)$ there exists a function (\textit{an upper gradient of mapping $\varphi$}) $|D_{up} \varphi| \in L^p(X)$ that is an upper gradient of $[\varphi]_{z}$ for all $z \in \widetilde{X}$.

We define a norm functional (modular) in $V^{1,p}(X;\widetilde{X})$ in the following way:
$$
    \|\varphi\|_{V^{1,p}(X;\widetilde{X})} = \inf\limits_{|D_{up} \varphi|} \||D_{up} \varphi|\|_{L^p(X)}.
$$

It was proved in \cite{R04}, if $X$ is a separable complete metric space, then the class $V^{1,p}(X; \widetilde{X})$ coincides with the definition of the Sobolev-type classes given by Korevaar and Schoen in \cite{KS93}, and, by \cite{HKST}, for $1 < p < \infty$, it coincides with the Newtonian-Sobolev class $N^{1,p}(X; \widetilde{X})$ (see the remark below). Moreover (see \cite{R97, R04} and \cite[Theorem 6.3.20]{HKST}), the collection of all upper gradients of $\varphi \in V^{1,p}(X; \widetilde{X})$ is a closed convex lattice inside $L^p(X)$ and, if nonempty, contains a function $|D_{min}\varphi| \in L^p(X)$ of minimal $L^p$-norm and for all upper gradients $|D_{up}\varphi|$

$$
    |D_{min}\varphi(x)| \leq |D_{up}\varphi(x)| \quad \text{for a.e. } x \in X.
$$

This allows us to consider the equivalent norm functional on $V^{1,p}(X, \widetilde{X})$, $1<p<\infty$: 
$$
    \|\varphi\|_{V^{1,p}(X;\widetilde{X})} = \||D_{min} \varphi|\|_{L^p(X)}.
$$

It is shown in \cite[Theorem 5.1]{R97} that for the composition of a mapping $\varphi: X \to \widetilde{X}$ in $V^{1,p}(X; \widetilde{X})$ and a Lipschitz function $f: \widetilde{X} \to \mathbb{R}$ we have the analogue of the chain rule. Namely, the composition $f \circ \varphi$ belongs to $N^{1,p}(X)$ and the following inequality holds 
    $$
        \nabla_{min}(f \circ \varphi)(x) \leq |D_{min}\varphi(x)| \cdot \Lip(f(\varphi(x))), \quad \text{for a. e. } x \in X,
    $$
where $\Lip(f(\varphi(x)))$ is a Lipschitz constant
$$
\Lip(f(y))=\sup\limits_{z\in \widetilde{X}}\frac{|f(y)-f(z)|}{\widetilde{\rho}(y,z)},
$$
calculated at the point $y = \varphi(x)$.

The equivalent definition can be obtained if we define a Newtonian-Sobolev space $N^{1,p}(X; V)$, where $V$ is a Banach space and then extend this definition to the metric spaces $\widetilde{X}$ using the Kuratowski embedding $\kappa: \widetilde{X} \to l^\infty(\widetilde{X})$, $(k(y))(z) = \varphi_y(z) = d(y,z) - d(z,y_0)$, where $y_0$ is a fixed basepoint (see \cite{HKST}). In \cite{HKST} it was shown that all of these definitions of Sobolev classes are equivalent if $X$ and $\widetilde{X}=\varphi(X)$ are separable spaces.

\subsection{Homogeneous metric measure space}

 Recall that  \textit{a homogeneous space} $(X,\rho,\mu)$ is a doubling metric measure space with $\sigma$-finite measure that supports the following absorption property:
\vskip 0.1cm
    
\textit{There is a constant $c_1\geq 1$ such that for all $x_1,x_2\in X$ and $r>0$ if balls $B(x_1,r)$, $B(x_2,r)$ intersect, then $B(x_2,r)\subset B(x_1,c_1r)$.}
\vskip 0.1cm

It is well-known that the doubling metric measure spaces are separable and hence all the facts about Newtonian-Sobolev spaces $N^{1,p}(X)$ and Reshetnyak-Sobolev classes $V^{1,p}(X; \widetilde{X})$ are valid on homogeneous spaces $(X,\rho,\mu)$.

Let $\varphi: \Omega \to \widetilde{\Omega}$ be a homeomorphism of domains $\Omega,\widetilde\Omega\subset X$. We define the metric Jacobian of the homeomorphism $\varphi$ at a point $x\in\Omega$  by the rule
$$
J(x,\varphi) = \lim\limits_{r \to 0} \frac{\mu(\varphi(B(x,r) ))}{\mu(B(x,r))}.
$$
It is known \cite{VU04,VU05} that $J(x,\varphi)$ is finite a.e. in $\Omega$ and belongs to $L^1_{\loc}(\Omega)$.

Let us formulate the change of variable formula in the Lebesgue integral \cite{VU04,VU05} in the case of bi-measurable homeomorphisms.

\begin{thm}
 Let $\varphi: \Omega \to \widetilde{\Omega}$ be a bi-measurable homeomorphism of domains $\Omega,\widetilde{\Omega}\subset X$. 
Then a function $f: \widetilde{\Omega} \to [0, \infty]$ is measurable if and only if a function $(f\circ \varphi)\cdot J(\cdot,\varphi): \Omega  \to [0, \infty]$ is measurable. Moreover, the change of variable formula 
        \begin{equation}\label{cvf}
            \int_\Omega f\circ\varphi(x) J(x,\varphi) \, d\mu(x) = \int_{\widetilde{\Omega}} f(y) \, d\mu(y)
        \end{equation}
holds.
\end{thm}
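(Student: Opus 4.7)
The plan is to reduce \eqref{cvf} to a pushforward computation and then invoke the Radon--Nikodym theorem together with the Lebesgue differentiation theorem on the doubling metric measure space $X$.

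First, for the measurability equivalence: since $\varphi$ is a homeomorphism, $\varphi$ and $\varphi^{-1}$ carry Borel sets to Borel sets, and bi-measurability (the Luzin $N$- and $N^{-1}$-properties) ensures that $\mu$-null sets are exchanged correctly. Consequently $f$ is $\mu$-measurable on $\widetilde\Omega$ if and only if $f\circ\varphi$ is $\mu$-measurable on $\Omega$; since $J(\cdot,\varphi)$ is measurable by its definition as a pointwise ball derivative, so is the product $(f\circ\varphi)\cdot J(\cdot,\varphi)$.

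For the integral identity, define on Borel subsets of $\Omega$ the set function $\nu(A) := \mu(\varphi(A))$. Since $\varphi$ is a homeomorphism, $\nu$ is a Borel measure on $\Omega$, and the Luzin $N$-property gives $\nu\ll\mu$. Because $\Omega$ is bounded in the doubling space $X$ we have $\mu(\Omega)<\infty$, so $\nu$ is finite, and the Radon--Nikodym theorem produces a nonnegative $h\in L^1(\Omega)$ with $d\nu = h\,d\mu$. The doubling property makes $\Omega$ a space of homogeneous type, so the Lebesgue differentiation theorem for absolutely continuous measures yields, for $\mu$-a.e.\ $x\in\Omega$,
$$
h(x)=\lim_{r\to 0}\frac{\nu(B(x,r))}{\mu(B(x,r))}=\lim_{r\to 0}\frac{\mu(\varphi(B(x,r)))}{\mu(B(x,r))}=J(x,\varphi).
$$
Thus $d\nu = J(\cdot,\varphi)\,d\mu$. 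For $f=\chi_E$ with Borel $E\subset\widetilde\Omega$, equation \eqref{cvf} now reads $\int_\Omega \chi_E(\varphi(x))J(x,\varphi)\,d\mu = \nu(\varphi^{-1}(E)) = \mu(E)$, which is exactly what was just established. Linearity extends the identity to non-negative simple functions and monotone convergence promotes it to arbitrary non-negative measurable $f$.

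The main obstacle is the identification $h=J(\cdot,\varphi)$ $\mu$-a.e.\ in the displayed equation: it requires a differentiation statement formulated with precisely the balls $B(x,r)$ that define the metric Jacobian, rather than with some abstract fine cover. In a doubling metric measure space this rests on a Vitali-type covering lemma for spaces of homogeneous type, and the absorption property built into the homogeneous space hypothesis of Section 2.3 is exactly what makes the ball-derivative well-behaved and equal to the Radon--Nikodym density almost everywhere; given that, the rest of the argument is bookkeeping via the standard approximation scheme for non-negative measurable functions.
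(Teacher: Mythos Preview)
The paper does not actually prove this theorem: it is stated with a citation to \cite{VU04,VU05} as a known result, so there is no in-text argument to compare against. Your Radon--Nikodym/differentiation approach is the standard route to such change-of-variable formulas on spaces of homogeneous type and is essentially correct.

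Two small points are worth tightening. First, your finiteness claim is misplaced: $\nu(\Omega)=\mu(\widetilde\Omega)$, not $\mu(\Omega)$, and the theorem as stated does not assume the domains are bounded. What you actually need is that $\nu$ is $\sigma$-finite, which follows because $\mu$ is $\sigma$-finite on $\widetilde\Omega$ and $\varphi$ is a bijection; then the Radon--Nikodym theorem applies without the extra boundedness hypothesis. Second, the ``only if'' half of the measurability equivalence is not quite complete: from measurability of $(f\circ\varphi)\,J(\cdot,\varphi)$ you recover $f\circ\varphi$ only off the set $Z=\{J(\cdot,\varphi)=0\}$, and you need $\mu(\varphi(Z))=0$ to conclude that $f$ is measurable on all of $\widetilde\Omega$. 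This is not circular, however: once you have established $d\nu=J(\cdot,\varphi)\,d\mu$ (which uses nothing about $f$), you immediately get $\mu(\varphi(Z))=\nu(Z)=\int_Z J\,d\mu=0$, and then the measurability equivalence follows. Reordering the argument so that the identification of the density precedes the measurability discussion would close this gap cleanly.
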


The corollary of this theorem is the fact that $\mu(\varphi(Z)) = 0$ if $Z = \{x \in \Omega: J(x,\varphi) = 0\}$.

The structure of homogeneous spaces allows us effectively use the differentiability properties of set functions that we will associate with the norm of the composition operator. 

Recall that a nonnegative function $\Phi$ defined on open subsets of $\Omega\subset X$ is called a monotone countably additive set function \cite{VU04,VU05} if

\noindent
1) $\Phi(U_1)\leq \Phi(U_2)$ if $U_1\subset U_2\subset\Omega$;

\noindent
2)  for any collection $U_i \subset U \subset \Omega$, $i=1,2,...$, of mutually disjoint open sets
$$
\sum_{i=1}^{\infty}\Phi(U_i) = \Phi\left(\bigcup_{i=1}^{\infty}U_i\right).
$$

The following lemma gives properties of monotone countably additive set functions defined on open subsets of $\Omega\subset X$ \cite{VU04,VU05}.

\begin{lem}
\label{lem:AddFun}
Let $\Phi$ be a monotone countably additive set function defined on open subsets of the domain $\Omega\subset X$. Then

\noindent
(a) at almost all points $x\in \Omega$ there exists a finite derivative
$$
\lim\limits_{r\to 0}\frac{\Phi(B(x,r))}{|B(x,r)|}=\Phi'(x);
$$

\noindent
(b) $\Phi'(x)$ is a measurable function;

\noindent
(c) for every open set $U\subset \Omega$ the inequality
$$
\int\limits_U\Phi'(x)~d\mu(x)\leq \Phi(U)
$$
holds.
\end{lem}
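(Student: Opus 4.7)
The plan is to reduce part (a) to the Lebesgue--Besicovitch differentiation theorem on the homogeneous space $X$. First I would extend $\Phi$ from open sets to a locally finite Borel measure on $\Omega$ via the Carath\'eodory construction: set $\Phi^{*}(E) = \inf\{\Phi(U) : E\subset U\subset \Omega,\ U \text{ open}\}$, verify that $\Phi^{*}$ is a Borel regular outer measure whose restriction to open subsets of $\Omega$ agrees with $\Phi$ (monotonicity and countable additivity on open sets are exactly what make this work), and note that it is finite on every compact $K\subset\Omega$ since $K$ is contained in a relatively compact open set. Denote the resulting Radon measure again by $\Phi$.

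Once $\Phi$ is a Radon measure, I invoke the fact that a homogeneous space (doubling plus absorption) admits a Vitali-type covering theorem for arbitrary Radon measures. From this the standard Lebesgue--Besicovitch differentiation theorem yields that the symmetric derivative
$$
\Phi'(x) = \lim_{r\to 0}\frac{\Phi(B(x,r))}{\mu(B(x,r))}
$$
exists and is finite at $\mu$-almost every $x\in\Omega$, and coincides with the Radon--Nikodym density $d\Phi_{ac}/d\mu$ of the absolutely continuous part in the Lebesgue decomposition $\Phi = \Phi_{ac} + \Phi_s$ with respect to $\mu$. This is exactly (a). For (b), each function $x\mapsto \Phi(B(x,r))/\mu(B(x,r))$ is Borel measurable because $x\mapsto \Phi(B(x,r))$ and $x\mapsto \mu(B(x,r))$ are lower semicontinuous as functions of $x$ for fixed $r$ (open balls vary lower-semicontinuously and $\Phi$, $\mu$ are Borel). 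Hence $\Phi'$, being the $\mu$-a.e.\ limit along a countable sequence $r_k\to 0$, is measurable.

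Part (c) is then immediate from the Lebesgue decomposition: for any open $U\subset\Omega$,
$$
\int_U \Phi'(x)\,d\mu(x) = \int_U \frac{d\Phi_{ac}}{d\mu}\,d\mu = \Phi_{ac}(U) \leq \Phi_{ac}(U) + \Phi_s(U) = \Phi(U),
$$
because the singular part $\Phi_s$ is a nonnegative measure. The main obstacle I expect is the first step: rigorously showing that a countably additive monotone function defined only on open sets extends to a Borel regular Radon measure, and confirming that the Vitali covering theorem is available in sufficient generality in the homogeneous (as opposed to Euclidean) setting to differentiate an arbitrary such $\Phi$ against the underlying $\mu$. This is precisely where the absorption property that defines a homogeneous space comes into play.
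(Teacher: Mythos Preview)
The paper does not actually prove this lemma; it is stated with a citation to \cite{VU04,VU05} and used as a black box. So there is no in-paper proof to compare against.

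Your outline is a correct and standard route. The Carath\'eodory step works: in a metric space, sets at positive distance can be enclosed in disjoint open sets, so countable additivity of $\Phi$ on disjoint open sets makes $\Phi^{*}$ a metric outer measure, hence Borel-regular, and $\Phi^{*}=\Phi$ on open sets by monotonicity. After that, the doubling of $\mu$ (together with the absorption/engulfing property that defines a homogeneous space) gives the Vitali covering theorem for balls with respect to $\mu$, which is exactly what is needed to differentiate an arbitrary locally finite Borel measure against $\mu$; parts (a)--(c) then follow from the Lebesgue decomposition as you wrote. One small refinement: your lower-semicontinuity claim for $x\mapsto\Phi(B(x,r))$ is safest argued \emph{after} passing to the extended Borel measure, since continuity of $\Phi$ from below on increasing open sets does not follow directly from countable additivity on \emph{disjoint} open sets.

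For context, the argument in the cited sources is somewhat more direct: the Vitali-type covering lemma is applied to the set function on balls and the upper/lower derivatives are handled by covering estimates, without first manufacturing a full Borel measure or invoking Radon--Nikodym. Your packaging via Lebesgue decomposition is cleaner conceptually; the trade-off is exactly the extension step you already identified as the point needing care.
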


The following version of the Lebesgue differentiation theorem on homogeneous spaces was given in  \cite[Corollary 3]{VU04}.
\begin{thm}
    Let $X=(X, \rho, \mu)$ be a homogeneous space, $U$ be a domain in $X$ and $f \in L^1_{\operatorname{loc}}(U)$, then, for almost all $x \in U$,
        $$
            \lim\limits_{r\to 0}\frac{1}{\mu(B(x, r))}\int_{B(x, r)} |f(y) - f(x)| \, d\mu(y) = 0.
        $$
\end{thm}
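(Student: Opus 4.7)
The plan is the classical Lebesgue differentiation argument adapted to the metric setting: combine a weak $(1,1)$ bound for the centered Hardy--Littlewood maximal operator with density of continuous functions in $L^1$. Since the conclusion is local, I would first fix a ball $B_0 \subset\subset U$ and replace $f$ by $f\chi_{B_0} \in L^1(X)$, reducing to the global statement on $(X,\rho,\mu)$.

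The first key step is a Vitali-type covering lemma on the homogeneous space: from any family of balls with uniformly bounded radii, extract a pairwise disjoint subfamily $\{B(x_i,r_i)\}$ such that the enlargements $\{B(x_i,c_1 r_i)\}$ cover the union of the original family. A standard greedy construction, selecting at each stage a ball with radius within a factor of two of the current supremum, produces such a subfamily; the absorption property is exactly what ensures that any discarded ball, necessarily intersecting a previously selected ball of comparable radius, is swallowed by that ball's $c_1$-dilate. Combining this covering lemma with the doubling condition yields the weak-type estimate
$$
\mu\bigl(\{x\in X : Mf(x)>\lambda\}\bigr)\leq \frac{C}{\lambda}\|f\|_{L^1(X)}
$$
for the centered maximal operator $Mf(x)=\sup_{r>0}\mu(B(x,r))^{-1}\int_{B(x,r)}|f|\,d\mu$, with $C$ depending only on $C_\mu$ and $c_1$.

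The second key step uses density of compactly supported continuous functions in $L^1(X)$, valid on any doubling metric measure space. Given $\varepsilon > 0$, choose a continuous $g$ with $\|f-g\|_{L^1(X)}<\varepsilon$ and set $h=f-g$. For $g$ the corresponding $\limsup$ vanishes at every point by uniform continuity on compact sets, so one obtains the pointwise bound
$$
\limsup_{r\to 0}\frac{1}{\mu(B(x,r))}\int_{B(x,r)}|f(y)-f(x)|\,d\mu(y)\leq Mh(x)+|h(x)|.
$$
Chebyshev's inequality applied to $|h|$ together with the weak-$(1,1)$ bound shows that the set on which the right-hand side exceeds any fixed $\lambda>0$ has $\mu$-measure at most $C\varepsilon/\lambda$. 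Sending $\varepsilon\to 0$ and then $\lambda\to 0$ through a countable sequence yields the conclusion almost everywhere on $B_0$, hence, exhausting $U$ by such balls, on all of $U$.

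The only technical ingredient beyond routine bookkeeping is the Vitali covering lemma, and this is precisely the place where the absorption property of homogeneous spaces plays its role: it is the geometric input that stands in for the triangle-inequality identity $B(x',r)\subset B(x,3r)$ automatic in normed spaces. Once the covering lemma is in hand, the maximal inequality and the $\varepsilon$-argument go through verbatim as in the Euclidean case.
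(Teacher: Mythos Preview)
Your argument is the standard and correct one: Vitali covering via the absorption property, the resulting weak-$(1,1)$ maximal inequality, and the density-of-continuous-functions limiting argument. There is nothing to object to in the outline; the absorption property is indeed precisely the substitute for the Euclidean inclusion $B(x',r)\subset B(x,3r)$ in the greedy selection step, and the rest is routine.

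However, there is no comparison to be made with the paper's own proof, because the paper does not prove this statement. It is quoted as a known result from the literature, with the attribution ``given in [Corollary~3]{VU04}'' and no argument supplied. So your proposal is not an alternative to the paper's approach but rather a proof of a theorem the paper merely cites. If anything, one minor point worth tightening is the appeal to density of compactly supported continuous functions: this uses local compactness of $X$, which the paper does invoke elsewhere but which is not automatic from the doubling condition alone. Replacing continuous functions by Lipschitz functions (dense in $L^1$ on any metric measure space with a Borel regular measure finite on balls) would sidestep this entirely, since for Lipschitz $g$ the vanishing of the averages is immediate.
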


\section{Composition operators on Sobolev spaces}

Let $\Omega,\widetilde{\Omega} \subset X$ be bounded domains in the homogeneous space $X=(X, \rho, \mu)$.
The bi-measurable homeomorphism $\varphi: \Omega \to \widetilde{\Omega}$ generates a bounded composition operator
    $$
        \varphi^*: S^{1,p}(\widetilde\Omega) \to S^{1,q}(\Omega), \quad 1 < q \leq p < \infty,
    $$
by the composition rule $\varphi^*(f) = f \circ \varphi$, if $\varphi^*(f) \in S^{1,q}(\Omega)$, and there exists a constant $C_{p,q} < \infty$, such that 
    \begin{equation}\label{comp}
        \|\varphi^*(f)\|_{S^{1,q}(\Omega)} \leq C_{p,q} \|f\|_{S^{1,p}(\widetilde\Omega)}
    \end{equation}
for any function $f\in S^{1,p}(\widetilde\Omega)$.

\subsection{The capacitary characteristics of mappings generating composition operators}

In the following assertion we prove the capacitary Luzin $N^{-1}$ property for mappings generating bounded composition operators on Newtonian-Sobolev spaces.

\begin{thm}
\label{CapacityPQ}
Let a bi-measurable homeomorphism $\varphi :\Omega\to \widetilde{\Omega}$ of bounded domains $\Omega,\widetilde{\Omega} \subset X$
generate a bounded composition operator
$$
\varphi^{\ast}: S^{1,p}(\widetilde{\Omega})\to S^{1,q}(\Omega), \quad 1< q\leq p<\infty.
$$
Then the inequality
$$
\cp_{q}^{1/q}(\varphi^{-1}(\widetilde{E});\Omega)
\leq C_{p,q}\cp_{p}^{1/p}(\widetilde{E};\widetilde{\Omega})
$$
holds for every Borel set $\widetilde{E}\subset\widetilde{\Omega}$. 
\end{thm}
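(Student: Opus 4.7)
The plan is a two-stage argument: prove the capacity inequality first for compact sets by testing the composition operator against admissible functions, and then bootstrap to an arbitrary Borel set by Choquet capacitability (inner regularity on Borel sets) of $\cp_q$, which is available since $\cp_p$ and $\cp_q$ are Choquet capacities in this setting.

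For the compact case, take a compact $\widetilde F\subset\widetilde\Omega$ and an arbitrary admissible $u\in N^{1,p}(\widetilde\Omega)$ with $u\geq 1$ on $\widetilde F$. Truncating to $[0,1]$ does not increase the minimal weak upper gradient, and combining this with the density of Lipschitz functions in $N^{1,p}(\widetilde\Omega)$ (recorded in Section~2), one may assume $u$ is continuous with $0\leq u\leq 1$ and $u\geq 1$ pointwise on $\widetilde F$. Since $\varphi$ is a homeomorphism of bounded domains, $\varphi^{-1}(\widetilde F)$ is compact in $\Omega$, and $u\circ\varphi$ is continuous on $\Omega$, bounded by $1$, and at least $1$ on $\varphi^{-1}(\widetilde F)$. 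Boundedness of $\Omega$ (finite measure by doubling) places $u\circ\varphi\in L^q(\Omega)$; combined with $\varphi^{*}(u)\in S^{1,q}(\Omega)$ from the composition hypothesis, one has $u\circ\varphi\in N^{1,q}(\Omega)$, which is admissible for $\cp_q(\varphi^{-1}(\widetilde F);\Omega)$. The composition-operator bound \eqref{comp} then gives
$$
\cp_q(\varphi^{-1}(\widetilde F);\Omega)^{1/q}\leq \|u\circ\varphi\|_{S^{1,q}(\Omega)}\leq C_{p,q}\,\|u\|_{S^{1,p}(\widetilde\Omega)},
$$
and an infimum over admissible $u$ produces $\cp_q^{1/q}(\varphi^{-1}(\widetilde F);\Omega)\leq C_{p,q}\,\cp_p^{1/p}(\widetilde F;\widetilde\Omega)$.

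For a Borel $\widetilde E\subset\widetilde\Omega$, the preimage $\varphi^{-1}(\widetilde E)$ is Borel in $\Omega$, hence $q$-capacity measurable, and Choquet capacitability yields
$$
\cp_q(\varphi^{-1}(\widetilde E);\Omega)=\sup\bigl\{\cp_q(K;\Omega)\,:\,K\subset\varphi^{-1}(\widetilde E),\ K\text{ compact}\bigr\}.
$$
For each such $K$, the image $\varphi(K)\subset\widetilde E$ is compact with $K=\varphi^{-1}(\varphi(K))$, so the compact case together with monotonicity of $\cp_p$ on Borel sets gives $\cp_q^{1/q}(K;\Omega)\leq C_{p,q}\,\cp_p^{1/p}(\widetilde E;\widetilde\Omega)$. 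Taking the supremum over $K$ closes the argument.

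The principal technical obstacle is admissibility at the level of precise representatives. Elements of $N^{1,p}(\widetilde\Omega)$ are determined only up to a set of $p$-capacity zero, so "$u\geq 1$ on $\widetilde F$" is truly a quasi-everywhere statement, and an exceptional $p$-capacity null subset of $\widetilde F$ might in principle pull back to a set on which $u\circ\varphi$ is not controlled; verifying that it pulls back to a $q$-capacity null set is in essence the content of the theorem, which threatens circularity. The density step above circumvents this: by restricting the infimum defining $\cp_p(\widetilde F;\widetilde\Omega)$ to continuous (indeed Lipschitz) admissible competitors, the pointwise inequality $u\circ\varphi\geq 1$ on $\varphi^{-1}(\widetilde F)$ holds automatically from the continuity of $u$ and the homeomorphism property of $\varphi$.
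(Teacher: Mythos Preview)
Your proposal is correct and follows essentially the same two-stage route as the paper: first prove the capacity inequality for compact sets by pushing admissible functions through the composition operator, then pass to Borel sets via Choquet capacitability. The only cosmetic difference is that the paper works directly with admissible functions $u\in C_0(\widetilde\Omega)\cap N^{1,p}(\widetilde\Omega)$ (so continuity and compact support are built in, and $v=u\circ\varphi\in C_0(\Omega)\cap N^{1,q}(\Omega)$ is immediately admissible), which short-circuits the truncation/density discussion and the precise-representative issue you spell out.
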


\begin{proof}

Let $F\subset E=\varphi^{-1}(\widetilde{E})$ be a compact set. Because $\varphi$ is a homeomorphism, $\widetilde{F}=\varphi(F)\subset\widetilde{E}$ is also a compact set.  Let $u\in C_0(\widetilde{\Omega})\cap N^{1,p}(\widetilde{\Omega})$ be an arbitrary admissible function such that $u\geq 1$ on $\widetilde{F}$. Then the composition $v=\varphi^{\ast}(u)$ belongs to $C_0(\Omega)\cap N^{1,q}({\Omega)}$, $v\geq 1$ on $F$ and 
$$
\|\varphi^{\ast}(u)\|_{S^{1,q}(\Omega)}\leq C_{p,q} \|u\|_{S^{1,p}(\widetilde{\Omega})}.
$$
Since the function $v=\varphi^{\ast}(u)\in C_0(\Omega)\cap N^{1,q}({\Omega)}$ is an admissible function for the compact $F\subset E$, we have the inequality
$$
\cp_{q}^{1/q}(\varphi^{-1}(\widetilde{F});\Omega)\leq \|\varphi^{\ast}(u)\|_{S^{1,q}(\Omega)}\leq C_{p,q} \|u\|_{S^{1,p}(\widetilde{\Omega})}.
$$
Taking the infimum over all functions $u\in C_0(\widetilde{\Omega})\cap N^{1,p}(\widetilde{\Omega})$ such that $u\geq 1$ on $\widetilde{F}$, we obtain 
$$
\cp_{q}^{1/q}(\varphi^{-1}(\widetilde{F});\Omega)
\leq C_{p,q}\cp_{p}^{1/p}(\widetilde{F};\widetilde{\Omega})
$$
for any compact set $\widetilde{F}\subset \widetilde{E}\subset \widetilde{\Omega}$.

Now for the Borel set $\widetilde{E}\subset \widetilde{\Omega}$ we have (by the definition of the $p$-capacity of Borel sets)
$$
\cp_{p}^{1/p}(\widetilde{F};\widetilde{\Omega})\leq \underline{\cp}_{p}^{1/p}(\widetilde{E};\widetilde{\Omega})=\cp_{p}^{1/p}(\widetilde{E};\widetilde{\Omega}).
$$
Hence 
$$
\cp_{q}^{1/q}(\varphi^{-1}(\widetilde{F});\Omega)\leq C_{p,q}\cp_{p}^{1/p}(\widetilde{E};\widetilde{\Omega}).
$$
Since $F=\varphi^{-1}(\widetilde{F})$ is an arbitrary compact set, $F\subset E$, $E$ is a Borel set as a preimage of the Borel set $\widetilde{E}$ under the homeomorphism $\varphi$, then
\begin{multline*}
\cp_{q}^{1/q}(\varphi^{-1}(\widetilde{E});\Omega)=\underline{\cp}_{q}^{1/q}(\varphi^{-1}(\widetilde{E});\Omega)=
\sup\limits_{F\subset E}\cp_{q}^{1/q}(F;\Omega)\\
\leq C_{p,q}\cp_{p}^{1/p}(\widetilde{E};\widetilde{\Omega}).
\end{multline*}

\end{proof}

\begin{cor}
\label{cor:Capacity}
Let a bi-measurable homeomorphism $\varphi :\Omega\to \widetilde{\Omega}$ of bounded domains $\Omega,\widetilde{\Omega} \subset X$
generate a bounded composition operator
$$
\varphi^{\ast}: S^{1,p}(\widetilde{\Omega})\to S^{1,q}(\Omega), \quad 1<q\leq p<\infty.
$$
Then the preimage of a set of $p$-capacity zero has $q$-capacity zero.
\end{cor}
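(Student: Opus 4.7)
The plan is to reduce the corollary directly to Theorem \ref{CapacityPQ}, whose conclusion applies only to Borel sets, by enveloping an arbitrary $p$-capacity zero set in a Borel set of $p$-capacity zero. Let $\widetilde{E}\subset\widetilde{\Omega}$ satisfy $\Cp_p(\widetilde{E};\widetilde{\Omega})=0$ (equivalently $\cp_p(\widetilde{E};\widetilde{\Omega})=0$ by the equivalence of null-sets recalled in Section 2.1).

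First I would produce a Borel majorant of $\widetilde{E}$ of $p$-capacity zero. Since the outer $p$-capacity vanishes, for every $k\in\mathbb{N}$ there exists an open set $U_k\supset \widetilde{E}$ with $\cp_p(U_k;\widetilde{\Omega})<1/k$. Setting $\widetilde{E}'=\bigcap_{k=1}^\infty U_k$ yields a $G_\delta$ (hence Borel) set containing $\widetilde{E}$, and monotonicity gives $\cp_p(\widetilde{E}';\widetilde{\Omega})\leq \cp_p(U_k;\widetilde{\Omega})<1/k$ for every $k$, so $\cp_p(\widetilde{E}';\widetilde{\Omega})=0$.

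Next I would apply Theorem \ref{CapacityPQ} to the Borel set $\widetilde{E}'$, obtaining
$$
\cp_q^{1/q}(\varphi^{-1}(\widetilde{E}');\Omega)\leq C_{p,q}\,\cp_p^{1/p}(\widetilde{E}';\widetilde{\Omega})=0.
$$
Finally, since $\varphi$ is a homeomorphism, $\varphi^{-1}(\widetilde{E})\subset\varphi^{-1}(\widetilde{E}')$, and $\varphi^{-1}(\widetilde{E}')$ is Borel, so monotonicity of the (outer) $q$-capacity yields $\cp_q(\varphi^{-1}(\widetilde{E});\Omega)=0$.

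There is no real obstacle here; the only mildly delicate point is that Theorem \ref{CapacityPQ} was stated for Borel sets, so the proof is essentially the passage from arbitrary capacity-zero sets to Borel capacity-zero sets via a standard Choquet-type $G_\delta$ envelope argument, which is available thanks to the Choquet capacity property recalled in Section 2.1.
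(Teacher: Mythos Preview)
Your argument is correct and is exactly the natural derivation the paper has in mind: the corollary is stated without proof as an immediate consequence of Theorem~\ref{CapacityPQ}, and your $G_\delta$-envelope step is the standard (and necessary) device to pass from the Borel case treated there to an arbitrary $p$-capacity null set.
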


\subsection{The analytical characteristics of mappings generating composition operators}

Since the homogeneous space $X=(X, \rho, \mu)$ is locally compact, we define \textit{local Sobolev spaces} $V^{1,p}_{\loc}(\Omega; \widetilde\Omega)$, $1 < p < \infty$, as follows: $\varphi \in V^{1,p}_{\loc}(\Omega; \widetilde\Omega)$ if and only if $\varphi \in V^{1,p}(U; \widetilde\Omega)$ for every open and bounded set $U \subset \Omega$ such that $\overline{U} \subset \Omega$, where $\overline{U}$ denotes the topological closure of $U$.

Let us define the inverse $q$-dilatation \cite{VU04,VU05} of a bi-measurable Sobolev homeomorphism $\varphi:\Omega\to\widetilde{\Omega}$ as
\begin{equation}
\label{p-dil}
H_q(y)=\left(\frac{|D_{min}\varphi(x)|^q}{J(x,\varphi)}\right)^{\frac{1}{q}}, \,\,x=\varphi^{-1}(y).
\end{equation}

Let us give the results regarding the boundedness of composition operators in Newtonian-Sobolev spaces. First, let us consider the case where $p=q$.

\begin{thm}\label{equal}
   Let $\Omega$, $\widetilde{\Omega}$ be bounded domains in the homogeneous space $X = (X, \rho, \mu)$. Then a bi-measurable homeomorphism $\varphi: \Omega \to \widetilde{\Omega}$ generates a bounded composition operator
    $$
        \varphi^*: S^{1,p}(\widetilde{\Omega}) \to S^{1,p}(\Omega), \quad \varphi^*(f) = f \circ \varphi, \quad 1 < p < \infty,
    $$
    if and only if $\varphi$ belongs to the Reshetnyak-Sobolev class $V^{1,p}_{\loc}(\Omega; \widetilde{\Omega})$ and 
		
		$$
		H_p(\varphi;\widetilde\Omega)=\ess\sup_{y\in \widetilde\Omega}H_p(y) < \infty.
		$$
\end{thm}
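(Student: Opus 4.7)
The plan is to prove both implications separately, with sufficiency reducing to Lipschitz test functions and the chain rule, and necessity using the Vodopyanov--Ukhlov set-function technique together with distance-based test functions.

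\textbf{Sufficiency.} Assume $\varphi \in V^{1,p}_{\loc}(\Omega;\widetilde\Omega)$ and $H_p(\varphi;\widetilde\Omega) < \infty$. Since Lipschitz functions are dense in $S^{1,p}(\widetilde\Omega)$ and the minimal upper gradient norm is lower semicontinuous, it suffices to bound $\|\varphi^{\ast}(f)\|_{S^{1,p}(\Omega)}$ for $f \in \Lip(\widetilde\Omega)$ and pass to the limit. The chain rule recalled after the definition of $V^{1,p}$ gives
$\nabla_{min}(f\circ\varphi)(x) \leq |D_{min}\varphi(x)| \cdot \Lip(f(\varphi(x)))$ a.e. The Luzin $N^{-1}$-property, together with the corollary to the change-of-variable theorem (which gives $\mu(\varphi(Z)) = 0$ for $Z = \{J(\cdot,\varphi) = 0\}$), forces $\mu(Z) = 0$, so \eqref{cvf} applies throughout $\Omega$. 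Raising the pointwise inequality to the $p$-th power and changing variables yields
$$
\int_\Omega |\nabla_{min}(f\circ\varphi)|^p\, d\mu \leq \int_{\widetilde\Omega} H_p(y)^p\, \Lip(f)(y)^p\, d\mu(y) \leq H_p(\varphi;\widetilde\Omega)^p\, \|f\|_{S^{1,p}(\widetilde\Omega)}^p,
$$
after using that $\Lip(f)$ agrees a.e.\ with $|\nabla_{min} f|$ for Lipschitz $f$ on doubling spaces supporting the weak $p$-Poincar\'e inequality.

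\textbf{Necessity, part 1: $\varphi \in V^{1,p}_{\loc}$.} For each $z \in \widetilde{X}$ the truncated distance function $f_z(y) := \min\{\widetilde\rho(y,z),\diam(\widetilde\Omega)\}$ is $1$-Lipschitz and lies in $S^{1,p}(\widetilde\Omega)$, so boundedness of $\varphi^{\ast}$ yields $[\varphi]_z = \varphi^{\ast}(f_z) \in S^{1,p}(\Omega)$ with a bound on $\|\nabla_{min}[\varphi]_z\|_{L^p(\Omega)}$ uniform in $z$. Fix a bounded open set $U \Subset \Omega$ and a countable dense subset $\{z_n\} \subset \widetilde{X}$, and set $g := \sup_n \nabla_{min}[\varphi]_{z_n}$. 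Uniform convergence $[\varphi]_{z_n} \to [\varphi]_z$ along subsequences $z_n \to z$, combined with Fuglede's lemma and the locality of minimal upper gradients, shows that $g$ is an upper gradient for every $[\varphi]_z$; monotone convergence plus the uniform $L^p$-bound gives $g \in L^p(U)$. Hence $\varphi \in V^{1,p}(U;\widetilde\Omega)$ for each such $U$, i.e., $\varphi \in V^{1,p}_{\loc}(\Omega;\widetilde\Omega)$.

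\textbf{Necessity, part 2: bounded $H_p$ via a set function.} Define on open $\widetilde U \subset \widetilde\Omega$
$$
\Phi(\widetilde U) := \sup\Bigl\{ \|\varphi^{\ast}(f)\|_{S^{1,p}(\Omega)}^p : f \in S^{1,p}(\widetilde\Omega),\ \supp f \subset \widetilde U,\ \|f\|_{S^{1,p}(\widetilde\Omega)} \leq 1 \Bigr\}.
$$
Boundedness of $\varphi^{\ast}$ gives $\Phi(\widetilde\Omega) \leq C_{p,p}^p$; monotonicity is immediate, and gluing near-optimal test functions on disjoint open components yields countable additivity on disjoint families. By Lemma \ref{lem:AddFun} the derivative $\Phi'(y)$ exists a.e., is measurable, and satisfies $\int_{\widetilde U} \Phi'(y)\, d\mu \leq \Phi(\widetilde U) \leq C_{p,p}^p$. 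Select a Lebesgue point $y_0 = \varphi(x_0)$ of the integrand $|D_{min}\varphi|^p / J(\cdot,\varphi) \circ \varphi^{-1}$, test $\Phi$ against suitably normalized distance-based functions localized to $B(y_0, r)$, and apply the chain rule together with \eqref{cvf}; passing to the limit $r \to 0$ via the Lebesgue differentiation theorem on the homogeneous space $X$ yields $H_p(y_0)^p \leq \Phi'(y_0) \leq C_{p,p}^p$ for a.e.\ $y_0$, hence $H_p(\varphi;\widetilde\Omega) \leq C_{p,p}$.

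\textbf{Main obstacle.} The delicate step is the lower matching $H_p(y_0)^p \leq \Phi'(y_0)$: one must design a family of test functions on $B(y_0, r)$ whose pullbacks asymptotically saturate the chain-rule inequality, so that $|D_{min}\varphi|$ (and not some strictly smaller upper gradient) is realized in the limit. The natural candidates are the distance functions $\widetilde\rho(\cdot, z_n)$ used to define $|D_{min}\varphi|$ as a supremum, but assembling them into a single scalar test function and controlling the resulting $S^{1,p}$-norm requires careful use of the weak $p$-Poincar\'e inequality on the target together with Lebesgue differentiation of $|D_{min}\varphi|^p / J$, all carried out on the homogeneous space rather than on $\mathbb{R}^n$.
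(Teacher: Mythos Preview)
Your necessity argument has a genuine gap in Part~2: the set function $\Phi(\widetilde U)=\sup\{\|\varphi^\ast(f)\|_{S^{1,p}(\Omega)}^p:\supp f\subset\widetilde U,\ \|f\|_{S^{1,p}(\widetilde\Omega)}\le 1\}$ is \emph{not} countably additive when $p=q$. Indeed, for disjoint open sets $\widetilde U_1,\widetilde U_2$, any admissible $f$ splits as $f_1+f_2$ with $\|f\|^p=\|f_1\|^p+\|f_2\|^p$ and $\|\varphi^\ast(f)\|^p=\|\varphi^\ast(f_1)\|^p+\|\varphi^\ast(f_2)\|^p$, so a convexity argument gives $\Phi(\widetilde U_1\cup\widetilde U_2)=\max\bigl(\Phi(\widetilde U_1),\Phi(\widetilde U_2)\bigr)$, not the sum. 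Lemma~\ref{lem:AddFun} therefore does not apply, $\Phi'$ carries no useful information (typically $\Phi(B(y_0,r))/\mu(B(y_0,r))\to\infty$), and the conclusion $H_p(y_0)^p\le\Phi'(y_0)$ is vacuous. The additive set function in Lemma~\ref{mainlem} is tailored to the strict case $q<p$, where the exponent $pq/(p-q)$ is precisely what makes disjoint contributions add; for $p=q$ that exponent is infinite and the mechanism disappears. The paper instead argues directly: for the localized test functions $f_z(y)=(\rho_z(y)-\rho_z(y_0))\,\eta(\rho(y,y_0)/r)$ one has $\nabla_{min}f_z\le 1+C$ pointwise, hence $\|f_z\|_{S^{1,p}(2B)}\lesssim\mu(B)^{1/p}$; plugging this into the composition inequality itself (no set function) yields $\int_{\varphi^{-1}(B)}(\nabla_{min}[\varphi]_z)^p\,d\mu\lesssim\mu(B)$, and after change of variables and Lebesgue differentiation the pointwise bound $(\nabla_{min}[\varphi]_z(x))^p\le\widetilde C_{p,p}^p\,J(x,\varphi)$ drops out for a.e.\ $x$ and \emph{every} $z$.

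There is a related gap in Part~1: a uniform bound $\sup_n\|\nabla_{min}[\varphi]_{z_n}\|_{L^p(U)}<\infty$ does \emph{not} imply $g=\sup_n\nabla_{min}[\varphi]_{z_n}\in L^p(U)$ (think of indicators of disjoint sets of equal measure). The paper avoids this by first obtaining the pointwise inequality above, which exhibits $\widetilde C_{p,p}\,J(\cdot,\varphi)^{1/p}\in L^p_{\loc}(\Omega)$ as a \emph{single} common upper gradient for all $[\varphi]_z$; this simultaneously proves $\varphi\in V^{1,p}_{\loc}$ and the bound $H_p(\varphi;\widetilde\Omega)<\infty$. Your sufficiency argument is essentially correct in outline, though note that on a general homogeneous space one only has $\Lip f\le C_l\,\nabla_{min}f$ with a constant $C_l\ge 1$ (not equality), and the extension from Lipschitz $f$ to all of $S^{1,p}(\widetilde\Omega)$ requires identifying the limit with $f\circ\varphi$, which the paper does via the capacitary Luzin $N^{-1}$-property (Corollary~\ref{cor:Capacity}) rather than lower semicontinuity alone.
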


\begin{proof} \textit{Necessity.}
We prove that a bi-measurable homeomorphism $\varphi$ belongs to the Reshetnyak-Sobolev class $V^{1,p}_{\loc}(\Omega; \widetilde{\Omega})$.

Let $B\subset\Omega$ be an arbitrary ball, such that $\overline{B}\subset\Omega$. Then $\widetilde{B}=\varphi(B)\subset\widetilde{\Omega}$ be an open bounded set, such that $\overline{\widetilde{B}}\subset\widetilde{\Omega}$.
Suppose $z \in \widetilde{B}$ be an arbitrary basepoint. Then we consider test functions $\rho_z(y) = \rho(y, z)$, $z \in \widetilde{B}$. The functions $\rho_z$ are Lipschitz functions with 
$$
\nabla_{up} \rho_z (y)= \Lip \rho_z (y) = 1\,\, \text{for almost all}\,\, y\in \widetilde{B}
$$ 
and belong to the Newtonian-Sobolev space $S^{1,p}(\widetilde{B})$ for all $1 < p < \infty$. 

Since the composition operator $\varphi^*$ is bounded, the functions $[\varphi]_{z}(x)=\rho_z \circ \varphi (x) =  \rho(\varphi(x), z)$ belong to the Newtonian-Sobolev space $S^{1,p}(B)$ for all $z \in \widetilde{B}$. Hence, by the Sobolev embedding theorems \cite[Theorem 9.1.15]{HKST} the functions $[\varphi]_{z}(x)$ belong to the normed Newtonian-Sobolev space $N^{1,p}(B)$ for all $z \in \widetilde{B}$.

Because $B\subset\Omega$ is an arbitrary ball, such that $\overline{B}\subset\Omega$, then the functions $[\varphi]_{z}(x)$ belong to the normed Newtonian-Sobolev space $N^{1,p}(U)$ for all $z \in \widetilde{U}$, where $U\subset\Omega$ is an open bounded set, such that $\overline{U}\subset\Omega$ and $\widetilde{U}=\varphi(U)\subset\widetilde{\Omega}$.

Fix an arbitrary point $y_0 \in \widetilde{U}\subset\widetilde{\Omega}$. Now denote by $B(y_0, r)\subset\widetilde{U}$ a ball with a center at the point $y_0$ and the radius $r$. Since $\varphi$ is a homeomorphism, $\varphi^{-1}(B(y_0, r))$ is an open connected set and for every function $f \in S^{1,p}(\widetilde{U})$, such that $\supp f \subset B(y_0, r)$, the following inequality 
    \begin{equation}
		\label{eq1}
        \|\varphi^*(f)\|_{S^{1,p}(\varphi^{-1}(B))} \leq C_{p,p} \cdot \|f\|_{S^{1,p}(B)},\,\, B=B(y_0, r),
    \end{equation}
holds.
    
Let  $z \in \widetilde{U}$ be a basepoint. We consider test functions 
\begin{equation}
		\label{test_f}
f_z(y) = (\rho_z(y)-\rho_z(y_0)) \eta\left(\frac{\rho(y,y_0)}{r}\right),
\end{equation}
where $\eta: \mathbb{R} \to \mathbb{R}$,  $\eta \in C^\infty_0(\mathbb{R})$, is a smooth function equal to one on $B(0,1)$ and equal to zero outside a ball $B(0, 2)$. 

It follows that $(f_z \circ \varphi)(x) = [\varphi]_{z}$ for all $x \in \varphi^{-1}(B)$ and $\nabla_{min} (f_z \circ \varphi) = \nabla_{min} [\varphi]_{z}$ for almost all $x \in \varphi^{-1}(B)$. Hence we obtain
\begin{multline*}
\|\nabla_{min} [\varphi]_{z}\|_{L^p(\varphi^{-1}(B))}=\|\nabla_{min} f_z \circ \varphi\|_{L^p(\varphi^{-1}(B))}\\
 \leq \|\nabla_{min} f_z \circ \varphi\|_{L^p(\varphi^{-1}(2B))} = \|\varphi^*(f_z)\|_{S^{1,p}(\varphi^{-1}(2B))}. 
\end{multline*}

The test functions $f_z$ are the products of the 1-Lipschitz functions $\rho_z: X \to \mathbb{R}$ and the smooth function $\eta: \mathbb{R} \to \mathbb{R}$. Hence for such functions by the Leibniz rule (\cite[Theorem 2.15]{BB}) and the chain rule (\cite[Theorem 2.16]{BB}), we have
\begin{multline*}
\nabla_{min}f_z (y) \leq \nabla_{up} f_z(y) = \nabla_{min}(\rho_z(y)-\rho_z(y_0))\left|\eta\left(\frac{\rho(y,y_0)}{r}\right)\right| \\
+ |(\rho_z(y)-\rho_z(y_0))|\left|\eta'\left(\frac{\rho(y,y_0)}{r}\right)\right|\frac{\nabla_{min}\rho(y,y_0)}{r}\\
=
\nabla_{min}\rho_z(y)\left|\eta\left(\frac{\rho(y,y_0)}{r}\right)\right| 
+ \frac{|(\rho_z(y)-\rho_z(y_0))|}{r}\left|\eta'\left(\frac{\rho(y,y_0)}{r}\right)\right|.
\end{multline*}
Since $\rho_z$ are 1-Lipschitz functions, we have 
$$
|(\rho_z(y)-\rho_z(y_0))|\leq \rho(y,y_0)\leq r.
$$
Hence 
$$
\nabla_{min}f_z (y) \leq \left|\eta\left(\frac{\rho(y,y_0)}{r}\right)\right| 
+ \left|\eta'\left(\frac{\rho(y,y_0)}{r}\right)\right|
\leq 1+\left|\eta'\left(\frac{\rho(y,y_0)}{r}\right)\right|
$$
for almost all $y\in B(y_0,r)$.

Since $\eta \in C^\infty_0(\mathbb{R})$, then the derivative $\eta'\left(\frac{\rho(y,y_0)}{r}\right)$ is bounded and we obtain that there exists a constant $C<\infty$, such that 
$$
\nabla_{min}f_z (y) \leq 1+C,
$$
for almost all $y\in B(y_0,r)$.

Using the doubling condition for the measure $\mu$, we obtain the inequality
    $$
        \|f_z\|_{S^{1,p}(2B)} = \|\nabla_{min}f_z\|_{L^{p}(2B)} \leq (1 +C)(\mu(2B))^{\frac{1}{p}} \leq (1+C)C^{\frac{1}{p}}_\mu (\mu(B))^{\frac{1}{p}},\,\,\text{for any}\,\,z\in \widetilde{U},
    $$
with the constant $(1+C)C^{\frac{1}{p}}_\mu<\infty$.

Substituting the obtained estimates in the inequality (\ref{eq1}), we have
    $$
        \int_{\varphi^{-1}(B)} (\nabla_{min} [\varphi]_{z} (x))^p \, d\mu(x) \leq \widetilde{C}^p_{p,p}\mu(B),
    $$
    where $\widetilde{C}_{p,p} = (1+C)C^{\frac{1}{p}}_\mu C_{p,p}$
    
Hence, by the change of variables formula \eqref{cvf},
    $$
        \int\limits_{B} \frac{(\nabla_{min} [\varphi]_{z} (\varphi^{-1}(y)))^p}{J(\varphi^{-1}(y),\varphi)} \, d\mu(y) \leq \widetilde{C}^p_{p,p}\mu(B),
    $$
that implies
$$
       \frac{1}{\mu(B)} \int\limits_{B} \frac{(\nabla_{min} [\varphi]_{z} (\varphi^{-1}(y)))^p}{J(\varphi^{-1}(y),\varphi)} \, d\mu(y) \leq \widetilde{C}^p_{p,p}.
    $$

 By the Lebesgue Differentiation Theorem, it follows that
    \begin{equation}
		\label{eq2}
        \frac{(\nabla_{min} [\varphi]_{z} (\varphi^{-1}(y)))^p}{J(\varphi^{-1}(y),\varphi)} \leq \widetilde{C}^p_{p,p},
				\,\,\text{for almost all}\,\, y\in \widetilde{U}.
    \end{equation}
Since $\varphi$ is a bi-measurable homeomorphism, this inequality (\ref{eq2}) is equivalent to the inequality
\begin{equation}
\label{eq3}
		(\nabla_{min} [\varphi]_{z} (x))^p \leq \widetilde{C}^p_{p,p} J(x,\varphi)<\infty, \,\,z\in \widetilde{U},
\end{equation}
which holds for almost all  $x\in U$.

Hence, 
$$
\int\limits_{U}(\nabla_{min} [\varphi]_{z} (x))^p~d\mu(x) \leq \widetilde{C}^p_{p,p} \int\limits_{U} J(x,\varphi)~d\mu(x)<\infty,
$$
for all $z\in \widetilde{U}$ and for all open sets $U\subset\Omega$, such that $\overline{U}\subset\Omega$ and $\widetilde{U}=\varphi(U)\subset\widetilde{\Omega}$.
Therefore, $(J(x,\varphi))^{\frac{1}{p}}$ is an upper gradient of the homeomorphism $\varphi: U \to \widetilde{U}$, and $\varphi$ belongs to the Reshetnyak-Sobolev class $V^{1,p}_{\loc}(\Omega; \widetilde{\Omega})$. Moreover, as $1< p< \infty$, there exists a minimal upper gradient $|D_{min}\varphi| \in L^p_{\loc}(\Omega)$.

To finish the proof, since $\widetilde{U}\subset\widetilde{\Omega}$ is an arbitrary open set, we rewrite the inequality \eqref{eq2} in the following form:
\begin{equation*}
\frac{|D_{min} \varphi(\varphi^{-1}(y))|^p}{J(\varphi^{-1}(y),\varphi)} \leq \widetilde{C}^p_{p,p}
\quad \text{for almost all} \quad y \in \widetilde{\Omega}.
\end{equation*}

Then
$$
		H_p(\varphi;\widetilde\Omega)=\ess\sup_{y\in \widetilde\Omega}H_p(y)=\ess\sup_{y\in \widetilde\Omega}\left(\frac{|D_{min} \varphi(\varphi^{-1}(y))|^p}{J(\varphi^{-1}(y),\varphi)}\right)^{\frac{1}{p}}\leq \widetilde{C}_{p,p}< \infty.
$$

\noindent
\textit{Sufficiency.}  Let $H_p(\varphi; \widetilde{\Omega}) < \infty$. First we prove that the inequality  
$$
\|f \circ \varphi\|_{S^{1,p}(\Omega)} \leq C_l H_p(\varphi; \widetilde{\Omega}) \|f\|_{S^{1,p}(\widetilde{\Omega})},
$$
holds for any Lipschitz function $f \in S^{1,p}(\widetilde{\Omega}) \cap \Lip(\widetilde{\Omega})$, where a constant $C_l \geq 1$ depends on the domain $\Omega$.

In \cite{HKST} it was proved, that if $\varphi \in V^{1,p}(\Omega; \widetilde{\Omega})$, then $\varphi$ is absolutely continuous along $p$-a.e. rectifiable curves in $\Omega$. Since $f$ is a Lipschitz function, this implies (\cite{C,HKST}) that there exists a constant $C_l \geq 1$ that depends only on the domain $\Omega$, such that
$$
\nabla_{min}f (\varphi(x))  \leq \Lip(f (\varphi(x))) \leq C_l \nabla_{min} f(\varphi(x))
$$ 
for almost all $x\in\Omega$. Hence \cite{R97}
$$
\nabla_{min}(f \circ \varphi)(x)\leq |D_{min}\varphi(x)| \cdot \Lip(f (\varphi(x))) \leq C_l |D_{min}\varphi(x)| \cdot \nabla_{min} f(\varphi(x))
$$
for almost all $x\in\Omega$.

Then, applying the change of variables formula \eqref{cvf}, we obtain

\begin{multline*}
        \|f \circ \varphi\|_{S^{1,p}(\Omega)} = \left( \int\limits_{\Omega} \left(\nabla_{min}(f \circ \varphi)(x)\right)^p \, d\mu(x) \right)^{\frac{1}{p}}\\
        \leq C_l\left( \int\limits_{\Omega} |D_{min}\varphi(x)|^p \cdot (\nabla_{min} f(\varphi(x)))^p \, d\mu(x) \right)^{\frac{1}{p}}\\
				= C_l \left( \int\limits_{\Omega} \frac{|D_{min}\varphi(x)|^p}{J(x,\varphi)} \cdot (\nabla_{min} f (\varphi(x)))^p J(x,\varphi) \, d\mu(x) \right)^{\frac{1}{p}} \\
				= C_l \left( \int\limits_{\widetilde\Omega} \frac{|D_{min}\varphi(\varphi^{-1}(y))|^p}{J(\varphi^{-1}(y),\varphi)} \cdot (\nabla_{min} f(y))^p  \, d\mu(y) \right)^{\frac{1}{p}}.
\end{multline*}
Since
$$
		H_p(\varphi;\widetilde\Omega)=\ess\sup_{y\in \widetilde\Omega}H_p(y)  =\ess\sup_{y\in \widetilde\Omega} \left(\frac{|D_{min}\varphi(\varphi^{-1}(y))|^p}{J(\varphi^{-1}(y),\varphi)}\right)^{\frac{1}{p}}< \infty,
$$
then 
\begin{equation*}
        \|f \circ \varphi\|_{S^{1,p}(\Omega)} \leq C_l H_p(\varphi;\widetilde\Omega) \left( \int\limits_{\widetilde\Omega} (\nabla_{min} f(y))^p \, d\mu(y) \right)^{\frac{1}{p}}=C_l H_p(\varphi;\widetilde\Omega)\|f \|_{S^{1,p}(\widetilde\Omega)}
\end{equation*}
for any Lipschitz function $f \in S^{1,p}(\widetilde\Omega) \cap \Lip(\widetilde{\Omega})$.

To extend the estimate onto all functions $f\in S^{1,p}(\widetilde{\Omega})$, $1< p<\infty$, consider a sequence of Lipschitz functions $f_k\in S^{1,p}(\widetilde{\Omega})$, $k=1,2,...$, such that $f_k\to f$ in $S^{1,p}(\widetilde{\Omega})$ and $f_k\to f$ $p$-quasi-everywhere in $\widetilde{\Omega}$ as $k\to\infty$. Since by Corollary~\ref{cor:Capacity} the preimage $\varphi^{-1}(S)$ of the set $S\subset \widetilde{\Omega}$ of $p$-capacity zero has the $p$-capacity zero, we have $\varphi^{\ast}(f_k)\to \varphi^{\ast}(f)$
$p$-quasi-everywhere in $\Omega$ as $k\to\infty$. This observation leads us to the following conclusion: extension by continuity of the operator $\varphi^{\ast}$ from $S^{1,p}(\widetilde{\Omega})\cap \Lip(\widetilde{\Omega})$ to $S^{1,p}(\widetilde{\Omega})$ coincides with the composition operator $\varphi^{\ast}$, $\varphi^{\ast}(f) = f\circ\varphi$.
   
\end{proof}

Let us define the $p$-dilatation \cite{VU04,VU05} of a bi-measurable Sobolev homeomorphism $\varphi:\Omega\to\widetilde{\Omega}$
\begin{equation}
\label{dil}
K_p(x)=\left(\frac{|D_{min}\varphi(x)|^p}{J(x,\varphi)}\right)^{\frac{1}{p}}.
\end{equation}

Then we have:

\begin{thm}\label{equal_p}
    Let $\Omega$, $\widetilde{\Omega}$ be bounded domains in the homogeneous space $X = (X, \rho, \mu)$. Then the bi-measurable homeomorphism $\varphi: \Omega \to \widetilde{\Omega}$ generates a bounded composition operator
    $$
        \varphi^*: S^{1,p}(\widetilde{\Omega}) \to S^{1,p}(\Omega), \quad \varphi^*(f) = f \circ \varphi, \quad 1 < p < \infty,
    $$
    if and only if $\varphi$ belongs to the Reshetnyak-Sobolev class $V^{1,p}_{\loc}(\Omega; \widetilde{\Omega})$ and 
		
		$$
		K_p(\varphi;\Omega)=\ess\sup_{x\in \Omega}K_p(x) < \infty.
		$$
\end{thm}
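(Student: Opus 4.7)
The plan is to reduce Theorem~\ref{equal_p} directly to Theorem~\ref{equal} by showing that, for a bi-measurable homeomorphism, the two essential suprema $K_p(\varphi;\Omega)$ and $H_p(\varphi;\widetilde\Omega)$ coincide. Since the local Reshetnyak--Sobolev membership $\varphi\in V^{1,p}_{\loc}(\Omega;\widetilde\Omega)$ appears identically in both statements, the equality of these two scalar invariants will immediately transfer the characterization obtained in Theorem~\ref{equal} into the form asserted in Theorem~\ref{equal_p}.

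First, I would record the pointwise identity. Comparing definitions \eqref{p-dil} and \eqref{dil},
$$
K_p(x) \;=\; H_p(\varphi(x)) \quad \text{wherever } J(x,\varphi) > 0.
$$
To see that this identity holds on a set of full measure in $\Omega$, let $Z = \{x\in\Omega : J(x,\varphi)=0\}$. By the corollary of Theorem~2.1 (the change of variables formula \eqref{cvf}), $\mu(\varphi(Z))=0$. Since $\varphi$ is bi-measurable, it enjoys the Luzin $N^{-1}$-property, and because $\varphi$ is a homeomorphism we have $\varphi^{-1}(\varphi(Z))=Z$; hence $\mu(Z)=0$. Thus $K_p$ is well defined a.e.\ in $\Omega$, $H_p$ is well defined a.e.\ in $\widetilde\Omega$, and the pointwise identity above holds a.e.

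Next, I would use both Luzin properties of $\varphi$ to transfer essential boundedness. Suppose $K_p(\varphi;\Omega)=M<\infty$. Choose $E\subset\Omega$ with $\mu(E)=0$ such that $K_p(x)\leq M$ on $\Omega\setminus E$. By the Luzin $N$-property, $\mu(\varphi(E))=0$, and for $y\in\widetilde\Omega\setminus\varphi(E)$ we have $H_p(y)=K_p(\varphi^{-1}(y))\leq M$, whence $H_p(\varphi;\widetilde\Omega)\leq M$. The reverse inequality is identical, using the Luzin $N^{-1}$-property to pass a null exceptional set $\widetilde E\subset\widetilde\Omega$ to $\varphi^{-1}(\widetilde E)\subset\Omega$. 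Therefore
$$
K_p(\varphi;\Omega) \;=\; H_p(\varphi;\widetilde\Omega).
$$

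Finally, applying Theorem~\ref{equal} finishes the proof: the boundedness of $\varphi^\ast: S^{1,p}(\widetilde\Omega)\to S^{1,p}(\Omega)$ is equivalent to $\varphi\in V^{1,p}_{\loc}(\Omega;\widetilde\Omega)$ together with $H_p(\varphi;\widetilde\Omega)<\infty$, which by the identity above is equivalent to $\varphi\in V^{1,p}_{\loc}(\Omega;\widetilde\Omega)$ together with $K_p(\varphi;\Omega)<\infty$. There is no substantive obstacle: the result is essentially a change of viewpoint between the source and target dilatations, and the only subtlety is verifying that the degeneracy set $\{J(\cdot,\varphi)=0\}$ is $\mu$-null, which is already handled by bi-measurability and the change of variables formula.
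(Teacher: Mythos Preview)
Your proposal is correct and follows exactly the paper's approach: the paper's own proof is the single line ``Since $\varphi$ is a bi-measurable homeomorphism, $K_p(\varphi;\Omega)=H_p(\varphi;\widetilde\Omega)$,'' and you have simply spelled out the details behind that equality (the pointwise identity $K_p(x)=H_p(\varphi(x))$, the fact that $\{J(\cdot,\varphi)=0\}$ is $\mu$-null via $N^{-1}$, and the transfer of essential suprema via the Luzin $N$ and $N^{-1}$ properties). There is nothing to add.
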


\begin{proof}
Since $\varphi: \Omega \to \widetilde{\Omega}$ is a bi-measurable homeomorphism, then 
    $$
		K_p(\varphi;\Omega)=\ess\sup_{x\in \Omega}K_p(x) =H_p(\varphi;\widetilde\Omega)=\ess\sup_{y\in \widetilde\Omega}H_p(y).
	$$
\end{proof}

In the case $1<q<p<\infty$, we use the following countable-additive property of composition operators on Sobolev spaces.

\begin{lem}
\label{mainlem}
    Let $\Omega$, $\widetilde{\Omega}$ be bounded domains in the homogeneous space $X = (X, \rho, \mu)$. If the homeomorphism $\varphi: \Omega \to \widetilde{\Omega}$ generates a bounded composition operator
    $$
        \varphi^*: S^{1,p}(\widetilde\Omega) \to S^{1,q}(\Omega), \quad \varphi^*(f) = f \circ \varphi, \quad 1< q < p < \infty,
    $$
    then the function
    $$
        \Phi(\widetilde{A}) = \sup\limits_{f \in S^{1,p}(\widetilde{A}) \cap C_0(\widetilde{A})} \Bigg( \frac{\|\varphi^\ast(f)\|_{S^{1,q}(\Omega)}}{\|f\|_{S^{1,p}(\widetilde{A})}} \Bigg)^{\frac{pq}{p-q}},
    $$
    is a bounded monotone countable additive set function defined on open bounded subsets $\widetilde{A} \subset \widetilde\Omega$.
\end{lem}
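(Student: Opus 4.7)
The proof has three components: boundedness, monotonicity, and countable additivity. The first two are routine, and the third is the core of the argument; it relies on a Hölder duality trick that is characteristic of the $q<p$ case of composition-operator theory.

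\textbf{Boundedness and monotonicity.} For any admissible test function $f\in S^{1,p}(\widetilde A)\cap C_0(\widetilde A)$, the zero extension $\bar f$ lies in $S^{1,p}(\widetilde\Omega)\cap C_0(\widetilde\Omega)$ with identical seminorm, so the boundedness hypothesis \eqref{comp} gives $\|\varphi^{\ast}(f)\|_{S^{1,q}(\Omega)}\le C_{p,q}\|f\|_{S^{1,p}(\widetilde A)}$, whence $\Phi(\widetilde A)\le C_{p,q}^{pq/(p-q)}$. The same zero-extension procedure proves $\Phi(\widetilde A_1)\le\Phi(\widetilde A_2)$ whenever $\widetilde A_1\subset\widetilde A_2$.

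\textbf{Countable additivity, lower bound.} Let $\widetilde A=\bigcup_i\widetilde A_i$ with the $\widetilde A_i$ mutually disjoint open subsets of $\widetilde\Omega$. Given $\varepsilon>0$ and $N\in\mathbb N$, choose test functions $f_i\in S^{1,p}(\widetilde A_i)\cap C_0(\widetilde A_i)$, $1\le i\le N$, with $a_i:=\|\varphi^{\ast}(f_i)\|_{S^{1,q}(\Omega)}/\|f_i\|_{S^{1,p}(\widetilde A_i)}$ satisfying $a_i^{pq/(p-q)}\ge\Phi(\widetilde A_i)-\varepsilon/N$. Because the $f_i$ have pairwise disjoint supports inside the open set $\widetilde A$, the locality of the minimal $p$-weak upper gradient yields $\|g\|_{S^{1,p}(\widetilde A)}^{p}=\sum_{i=1}^{N}c_i^{p}\|f_i\|_{S^{1,p}(\widetilde A_i)}^{p}$ and $\|\varphi^{\ast}(g)\|_{S^{1,q}(\Omega)}^{q}=\sum_{i=1}^{N}c_i^{q}\|\varphi^{\ast}(f_i)\|_{S^{1,q}(\Omega)}^{q}$ for $g=\sum c_i f_i$ (here the $\varphi^{\ast}(f_i)$ are supported in the disjoint preimages $\varphi^{-1}(\widetilde A_i)$). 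Choosing the extremal weights $c_i=a_i^{q/(p-q)}/\|f_i\|_{S^{1,p}(\widetilde A_i)}$ —the choice that realises equality in the Hölder inequality below— a direct computation yields
\[
\bigl(\|\varphi^{\ast}(g)\|_{S^{1,q}(\Omega)}/\|g\|_{S^{1,p}(\widetilde A)}\bigr)^{pq/(p-q)}=\sum_{i=1}^{N}a_i^{pq/(p-q)}.
\]
Hence $\Phi(\widetilde A)\ge\sum_{i=1}^{N}(\Phi(\widetilde A_i)-\varepsilon/N)$; letting $\varepsilon\to 0$ and then $N\to\infty$ gives $\Phi(\widetilde A)\ge\sum_{i=1}^{\infty}\Phi(\widetilde A_i)$.

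\textbf{Countable additivity, upper bound.} Fix $f\in S^{1,p}(\widetilde A)\cap C_0(\widetilde A)$. Because the $\widetilde A_i$ are mutually disjoint open sets whose union is the open set $\widetilde A$, each $\widetilde A_i$ is relatively clopen in $\widetilde A$, and the compact support of $f$ meets at most finitely many of them. The decomposition $f=\sum_{i=1}^{N}f_i$ with $f_i=f|_{\widetilde A_i}$ therefore makes sense, and $f_i\in S^{1,p}(\widetilde A_i)\cap C_0(\widetilde A_i)$. Using again the disjointness of supports, $\|f\|_{S^{1,p}(\widetilde A)}^{p}=\sum\|f_i\|_{S^{1,p}(\widetilde A_i)}^{p}$ and $\|\varphi^{\ast}(f)\|_{S^{1,q}(\Omega)}^{q}=\sum\|\varphi^{\ast}(f_i)\|_{S^{1,q}(\Omega)}^{q}$. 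Writing $\|\varphi^{\ast}(f_i)\|^{q}=\bigl(\|\varphi^{\ast}(f_i)\|/\|f_i\|\bigr)^{q}\|f_i\|^{q}$ and applying Hölder's inequality with conjugate exponents $p/q$ and $p/(p-q)$:
\[
\|\varphi^{\ast}(f)\|_{S^{1,q}(\Omega)}^{q}\le\Bigl(\sum_{i=1}^{N}\bigl(\|\varphi^{\ast}(f_i)\|/\|f_i\|\bigr)^{pq/(p-q)}\Bigr)^{(p-q)/p}\Bigl(\sum_{i=1}^{N}\|f_i\|_{S^{1,p}(\widetilde A_i)}^{p}\Bigr)^{q/p}.
\]
The first factor is bounded by $\bigl(\sum_i\Phi(\widetilde A_i)\bigr)^{(p-q)/p}$ and the second equals $\|f\|_{S^{1,p}(\widetilde A)}^{q}$. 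Dividing and raising to the power $p/(p-q)$ gives $\bigl(\|\varphi^{\ast}(f)\|/\|f\|\bigr)^{pq/(p-q)}\le\sum_i\Phi(\widetilde A_i)$, whence $\Phi(\widetilde A)\le\sum_i\Phi(\widetilde A_i)$.

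\textbf{Expected obstacle.} The only delicate issue is justifying the disjoint-support identities $\|g\|_{S^{1,p}(\widetilde A)}^{p}=\sum c_i^{p}\|f_i\|_{S^{1,p}(\widetilde A_i)}^{p}$ and its $S^{1,q}$-analogue for $\varphi^{\ast}(g)$: it uses the locality of the minimal $p$-weak upper gradient on disjoint open sets and, for the composition side, that $\varphi$ is a homeomorphism so that the images/preimages of the disjoint $\widetilde A_i$ remain disjoint. Everything else is the standard Hölder sharp/reverse duality that produces the exponent $pq/(p-q)$.
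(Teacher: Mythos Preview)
Your argument is correct and is precisely the standard proof from \cite{U93,VU04} that the paper cites without reproducing; the H\"older duality trick yielding the exponent $pq/(p-q)$ and the disjoint-support decomposition are exactly the ingredients used there. The one point worth tightening is the clopen/compactness step in the upper bound: since the $\widetilde A_i$ are relatively clopen in $\widetilde A$, the intersection of $\operatorname{supp} f$ with each $\widetilde A_i$ is indeed compact, so $f_i=f|_{\widetilde A_i}\in S^{1,p}(\widetilde A_i)\cap C_0(\widetilde A_i)$ as you claim, and locality of the minimal $p$-weak upper gradient (e.g.\ \cite[Ch.~2]{BB}) justifies the norm identities on both sides.
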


The proof of this lemma repeats the proof of the corresponding lemmas from \cite{U93,VU04} and we don't give it in the present article.

The second main statement is the following theorem.

\begin{thm}\label{different}
    Let $\Omega$, $\widetilde{\Omega}$ be bounded domains in the homogeneous space $X = (X, \rho, \mu)$. The bi-measurable homeomorphism $\varphi: \Omega \to \widetilde{\Omega}$ generates a bounded composition operator
    $$
        \varphi^*: S^{1,p}(\widetilde\Omega) \to S^{1,q}(\Omega), \quad \varphi^*(f) = f \circ \varphi, \quad 1< q < p < \infty,
    $$
    if and only if $\varphi \in V^{1,q}_{\loc}(\Omega; \widetilde{\Omega})$  and 
    $$
		H_{p,q}(\varphi;\widetilde\Omega)=\left(\int\limits_{\widetilde\Omega} H_q^{\frac{pq}{p-q}}(y)~d\mu(y) \right)^{\frac{p-q}{pq}}< \infty.
	$$
\end{thm}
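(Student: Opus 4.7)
The plan is to parallel the structure of Theorem~\ref{equal} (the case $p=q$), replacing the essential supremum $H_p$ by the integral functional $H_{p,q}$ and using Lemma~\ref{mainlem} to supply a monotone countably additive set function $\Phi$ on open subsets of $\widetilde{\Omega}$ that encodes the local operator norm.

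\textbf{Necessity.} I would fix a ball $B=B(y_0,r)\subset\widetilde{\Omega}$ with $\overline{2B}\subset\widetilde{\Omega}$ and, for each basepoint $z\in\widetilde{\Omega}$, use the localized distance test function
\[
f_z(y)=\bigl(\rho_z(y)-\rho_z(y_0)\bigr)\,\eta\!\left(\frac{\rho(y,y_0)}{r}\right)
\]
exactly as in Theorem~\ref{equal}. The same Leibniz/chain-rule computation together with the doubling condition yields $\|f_z\|_{S^{1,p}(2B)}\leq C\mu(B)^{1/p}$ uniformly in $z$. Substituting $f_z$ into the bound
\[
\|\varphi^{\ast}(f_z)\|_{S^{1,q}(\Omega)}^{q}\leq\Phi(2B)^{(p-q)/p}\,\|f_z\|_{S^{1,p}(2B)}^{q}
\]
extracted from Lemma~\ref{mainlem}, and using that $f_z\circ\varphi=[\varphi]_z-\rho_z(y_0)$ on $\varphi^{-1}(B)$, I obtain
\[
\int_{\varphi^{-1}(B)}\bigl(\nabla_{min}[\varphi]_z(x)\bigr)^{q}\,d\mu(x)\leq C\,\Phi(2B)^{(p-q)/p}\mu(B)^{q/p}.
\]
A change of variables on the left, division by $\mu(B)$, and the algebraic rearrangement $\Phi(2B)^{(p-q)/p}\mu(B)^{q/p-1}=(\Phi(2B)/\mu(B))^{(p-q)/p}$ put both sides on scaling-invariant footing. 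The Lebesgue differentiation theorem on the left together with Lemma~\ref{lem:AddFun}(a) and the doubling property on the right then yield
\[
\frac{(\nabla_{min}[\varphi]_z(\varphi^{-1}(y)))^{q}}{J(\varphi^{-1}(y),\varphi)}\leq C'\,\Phi'(y)^{(p-q)/p}\quad\text{for a.e.\ }y\in\widetilde{\Omega}.
\]
Promoting $\nabla_{min}[\varphi]_z$ to $|D_{min}\varphi|$ by taking a supremum over a countable dense family $\{z_i\}\subset\widetilde{\Omega}$, raising to the power $p/(p-q)$, and integrating, Lemma~\ref{lem:AddFun}(c) delivers $\int_{\widetilde{\Omega}}H_q^{pq/(p-q)}\,d\mu\leq C''\Phi(\widetilde{\Omega})<\infty$. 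The local membership $\varphi\in V^{1,q}_{\loc}(\Omega;\widetilde{\Omega})$ falls out of the same pointwise control as in Theorem~\ref{equal}.

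\textbf{Sufficiency and main obstacle.} For Lipschitz $f\in S^{1,p}(\widetilde{\Omega})\cap\Lip(\widetilde{\Omega})$, the chain rule of \cite{R97} combined with the change of variables formula~\eqref{cvf} gives
\[
\|f\circ\varphi\|_{S^{1,q}(\Omega)}^{q}\leq C_l^{q}\int_{\widetilde{\Omega}}H_q(y)^{q}\bigl(\nabla_{min}f(y)\bigr)^{q}\,d\mu(y),
\]
and H\"older's inequality with exponents $p/(p-q)$ and $p/q$ produces the bound $C_l\,H_{p,q}(\varphi;\widetilde{\Omega})\|f\|_{S^{1,p}(\widetilde{\Omega})}$. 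Extension from Lipschitz functions to arbitrary $f\in S^{1,p}(\widetilde{\Omega})$ is by density together with Corollary~\ref{cor:Capacity}, exactly as at the end of Theorem~\ref{equal}. I expect the delicate step to be the pointwise promotion in the necessity argument: the inequality furnished by $\Phi$ is indexed separately by each $z$, whereas $|D_{min}\varphi|$ is the a.e.\ supremum of the $\nabla_{min}[\varphi]_z$. A priori the null set where the Lebesgue differentiation fails could depend on $z$, so I would use separability of $\widetilde{\Omega}$ to reduce to a countable dense $\{z_i\}$, take a common null set, and invoke the Reshetnyak characterization of $|D_{min}\varphi|$ as $\sup_i\nabla_{min}[\varphi]_{z_i}$ almost everywhere to close the gap.
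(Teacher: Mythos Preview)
Your proposal matches the paper's proof essentially step for step; the test functions, the use of Lemma~\ref{mainlem}, the change of variables plus Lebesgue differentiation, and the sufficiency argument via H\"older and Corollary~\ref{cor:Capacity} are all exactly what the paper does.

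The one place you diverge is the step you flag as the main obstacle: promoting the pointwise bound on $\nabla_{min}[\varphi]_z$ to one on $|D_{min}\varphi|$. The paper avoids your countable-dense-set argument entirely. Since the right-hand side of the pointwise inequality is independent of $z$ and dominates $\nabla_{min}[\varphi]_z$ almost everywhere for each fixed $z$, that right-hand side (raised to the power $1/q$) is a $q$-weak upper gradient of every $[\varphi]_z$ and hence, by definition, an upper gradient of the mapping $\varphi$; minimality of $|D_{min}\varphi|$ then yields $|D_{min}\varphi|\leq g$ a.e.\ directly. There is no need to align null sets across $z$ or to invoke an identity $|D_{min}\varphi|=\sup_i\nabla_{min}[\varphi]_{z_i}$ over a countable dense family. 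Your route would also work, but the minimality argument is shorter and uses nothing beyond the definition of the Reshetnyak--Sobolev class.
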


\begin{proof} \textit{Necessity.}
Let us first prove that a bi-measurable homeomorphism $\varphi$ belongs to the Reshetnyak-Sobolev class $V^{1,q}_{\loc}(\Omega; \widetilde{\Omega})$.

Let $B\subset\Omega$ be an arbitrary ball, such that $\overline{B}\subset\Omega$. Then $\widetilde{B}=\varphi(B)\subset\widetilde{\Omega}$ be an open bounded set, such that $\overline{\widetilde{B}}\subset\widetilde{\Omega}$.
Suppose $z \in \widetilde{B}$ be an arbitrary basepoint. Then we consider test functions $\rho_z(y) = \rho(y, z)$, $z \in \widetilde{B}$. The functions $\rho_z$ are Lipschitz functions with 
$$
\nabla_{up} \rho_z (y)= \Lip \rho_z (y) = 1\,\, \text{for almost all}\,\, y\in \widetilde{B}
$$ 
and belong to the Newtonian-Sobolev space $S^{1,p}(\widetilde{B})$ for all $1 < p < \infty$. 

Since the composition operator $\varphi^*$ is bounded, the functions $[\varphi]_{z}(x)=\rho_z \circ \varphi (x) =  \rho(\varphi(x), z)$ belong to the Newtonian-Sobolev space $S^{1,q}(B)$ for all $z \in \widetilde{B}$. Hence, by the Sobolev embedding theorems \cite[Theorem 9.1.15]{HKST} the functions $[\varphi]_{z}(x)$ belong to the normed Newtonian-Sobolev space $N^{1,q}(B)$ for all $z \in \widetilde{B}$.

Because $B\subset\Omega$ is an arbitrary ball, such that $\overline{B}\subset\Omega$, then the functions $[\varphi]_{z}(x)$ belong to the normed Newtonian-Sobolev space $N^{1,q}(U)$ for all $z \in \widetilde{U}$, where $U\subset\Omega$ is an open bounded set, such that $\overline{U}\subset\Omega$ and $\widetilde{U}=\varphi(U)\subset\widetilde{\Omega}$.

By Lemma \ref{mainlem} for every open set $\widetilde{A} \subset \widetilde{U}$ and for every continuous function $f \in S^{1,p}(\widetilde{U})$, $\supp f \subset \widetilde{A}$, the following inequality holds:
    \begin{equation*}
        \|\varphi^*(f)\|_{S^{1,q}(\varphi^{-1}(\widetilde{A}))} \leq (\Phi(\widetilde{A}))^{\frac{p-q}{pq}} \|f\|_{S^{1,p}(\widetilde{A})}.
    \end{equation*}

Fix an arbitrary point $y_0 \in \widetilde{U}\subset\widetilde{\Omega}$. Now denote by $B(y_0, r)\subset\widetilde{U}$ a ball with a center at the point $y_0$ and a radius $r$. Since $\varphi$ is a homeomorphism, $\varphi^{-1}(B(y_0, r))$ is an open connected set.    
Let  $z \in \widetilde{U}$ be a basepoint. We consider test functions defined by the equation (\ref{test_f})
$$
f_z(y) = (\rho_z(y) -\rho_z(y_0)) \eta\left(\frac{\rho(y,y_0)}{r}\right).
$$
By using the estimates of the test functions $f_z$ in Sobolev spaces, obtained in the proof of Theorem~\ref{equal}, we have
    $$
        \int_{\varphi^{-1}(B)} (\nabla_{min} [\varphi]_{z} (x))^q \, d\mu(x) \leq \widetilde{C}^q_{p,q} (\Phi(2B))^{\frac{p-q}{p}} (\mu(B))^{\frac{q}{p}}.
    $$

    Hence, by the the change of variables formula \eqref{cvf},
    $$
        \int_{B} \frac{(\nabla_{min} [\varphi]_{z} (\varphi^{-1}(y)))^q}{J(\varphi^{-1}(y),\varphi)} \, d\mu(y) \leq \widetilde{C}^q_{p,q} (\Phi(2B))^{\frac{p-q}{p}}  (\mu(B))^{\frac{q}{p}}.
    $$

    Further, divide both sides on $\mu(B)$ and use the Lebesgue Differentiation Theorem with $r \to 0$:
    \begin{equation}\label{pqdilatatation}
        \Big( \frac{(\nabla_{min} [\varphi]_{z} (\varphi^{-1}(y)))^q}{J(\varphi^{-1}(y),\varphi)} \Big)^{\frac{p}{p-q}} \leq \widetilde{C}^{\frac{pq}{p-q}}_{p,q} C_\mu \Phi'(y) \quad \text{for almost all } y \in \widetilde{U}.
    \end{equation}

    Since $\varphi$ is a bi-measurable homeomorphism, the above inequality equivalents to the equation
\begin{equation}
\label{eqpoint}
		(\nabla_{min} [\varphi]_{z} (x))^q \leq \widetilde{C}^q_{p,q} C_\mu (\Phi'(\varphi(x)))^{\frac{p-q}{p}} J(x,\varphi)<\infty,
\end{equation}
which holds for almost all  $x\in U$ and for all $z\in \widetilde{U}$.

Hence, 
\begin{multline*}
    \int\limits_{U}(\nabla_{min} [\varphi]_{z} (x))^q~d\mu(x) \leq  \widetilde{C}^q_{p,q} C_\mu \int\limits_{U} (\Phi'(\varphi(x))^{\frac{p-q}{p}} J(x,\varphi)~d\mu(x) \\
    = \widetilde{C}^q_{p,q} C_\mu \int\limits_{\widetilde{U}} (\Phi'(y))^{\frac{p-q}{p}}~d\mu(y) <\infty.
\end{multline*}
Therefore, $(\Phi'(\varphi(x))^{\frac{p-q}{p}} J(x,\varphi)$ is an upper gradient of the homeomorphism $\varphi: U \to \widetilde{U}$ and $\varphi$ belongs to the Reshetnyak-Sobolev class $V^{1,q}_{\loc}(\Omega; \widetilde{\Omega})$. Moreover, since $1< q < \infty$, there exists a minimal upper gradient $|D_{min}\varphi| \in L^q_{\loc}(\Omega)$.

    To finish the proof, since $\widetilde{U}\subset\widetilde{\Omega}$ is an arbitrary open set, we infer from the inequality \eqref{pqdilatatation} the following relation:
    $$
         \Big( \frac{|D_{min}\varphi (\varphi^{-1}(y))|^q}{J(\varphi^{-1}(y),\varphi)} \Big)^{\frac{p}{p-q}} \leq \widetilde{C}^{\frac{pq}{p-q}}_{p,q} C_\mu \Phi'(y) \quad \text{for almost all } y \in \widetilde{\Omega}.
    $$
    
    Then, integrating over $\widetilde\Omega$, we obtain
    \begin{multline*}
        H^{\frac{pq}{p-q}}_{p,q}(\varphi; \widetilde\Omega) = \int_{\widetilde\Omega} \Big(\frac{|D_{min} \varphi (\varphi^{-1}(y))|^q}{J(\varphi^{-1}(y),\varphi)} \Big)^{\frac{p}{p-q}} \, d\mu(y) \\
        \leq \widetilde{C}^q_{p,q} C_\mu \int_{\widetilde\Omega} \Phi'(y) \, d\mu(y) \leq \widetilde{C}^q_{p,q} C_\mu \Phi(\widetilde\Omega) \leq \widetilde{C}^q_{p,q} C_\mu \|\varphi^*\|^{\frac{pq}{p-q}}.
    \end{multline*}

    \textit{Sufficiency.}
    Assume that $H_{p,q}(\varphi; \widetilde\Omega) < \infty$.
    Let us proof that for an arbitrary Lipschitz function $f \in S^{1,p}(\widetilde\Omega) \cap \Lip(\widetilde\Omega)$ the following inequality holds: 
    $$
        \|f \circ \varphi\|_{S^{1,q}(\Omega)} \leq C_l H_{p,q}(\varphi; \widetilde\Omega) \|f\|_{S^{1,p}(\widetilde\Omega)},
    $$
	where a constant $C_l \geq 1$ depends on the domain $\Omega$.

    In \cite{HKST} it was proved that if $\varphi \in V^{1,q}(\Omega; \widetilde{\Omega})$, then $\varphi$ is absolutely continuous along $q$-a.e. rectifiable curves in $\Omega$. Together with the fact that $f$ is Lipschitz, this implies (\cite{C, HKST}) that $\Lip(f \circ \varphi)$ is an upper gradient of $f \circ \varphi$ and there exists a constant $C_l \geq 1$, such that
    $$
        \nabla_{min}f(\varphi(x))  \leq \Lip(f(\varphi(x))) \leq C_l  \nabla_{min}f(\varphi(x))
    $$ 
    for almost all $x\in\Omega$. Hence \cite{R97}
    $$
        \nabla_{min}(f \circ \varphi)(x)\leq |D_{min}\varphi(x)| \cdot \Lip(f (\varphi(x))) \leq C_l |D_{min}\varphi(x)| \cdot \nabla_{min} f(\varphi(x))
    $$
    for almost all $x\in\Omega$.

    Then, applying the change of variables formula \eqref{cvf} and H\"older inequality, we obtain
    \begin{multline*}
        \|f \circ \varphi\|^q_{S^{1,q}(\Omega)}  = \int_{\Omega} (\nabla_{min}(f \circ \varphi)(x))^q \, d\mu(x) \\
        \leq C_l \int_{\Omega} |D_{min}\varphi(x)|^q \cdot (\nabla_{min} f(\varphi(x)))^q \, d\mu(x) \\
        \leq C_l \int_{\widetilde\Omega} (\nabla_{min} f(y))^q \frac{|D_{min} \varphi(\varphi^{-1}(y))|^q}{J(\varphi^{-1}(y), \varphi)} \, d\mu(y) \\
        \leq C_l \Bigg( \int_{\widetilde\Omega} (\nabla_{min} f(y))^p \, d\mu(y)\Bigg)^{\frac{q}{p}} \Bigg( \int_{\widetilde\Omega} \Big(\frac{|D_{min} \varphi(\varphi^{-1}(y))|^q}{J(\varphi^{-1}(y), \varphi)} \Big)^{\frac{p}{p-q}} \, d\mu(y) \Bigg)^{\frac{p-q}{p}}.
    \end{multline*}

    Note that in the above inequalities we used the property $\mu(\varphi(Z)) = 0$ for $Z = \{x \in \Omega: J(x,\varphi) = 0\}$. 

    Since $H_{p,q}(\varphi; \widetilde\Omega) < \infty$, for any Lipschitz function we have
    $$
        \|f \circ \varphi\|_{S^{1,q}(\Omega)} \leq C_l H_{p,q}(\varphi; \widetilde\Omega) \|f\|_{S^{1,p}(\widetilde\Omega)}.
    $$

    To extend the estimate onto all functions $f\in S^{1,p}(\widetilde{\Omega})$, $1< p<\infty$, consider a sequence of Lipschitz functions $f_k\in S^{1,p}(\widetilde{\Omega})$, $k=1,2,...$, such that $f_k\to f$ in $S^{1,p}(\widetilde{\Omega})$ and $f_k\to f$ $p$-quasi-everywhere in $\widetilde{\Omega}$ as $k\to\infty$. Since by Corollary~\ref{cor:Capacity} the preimage $\varphi^{-1}(S)$ of the set $S\subset \widetilde{\Omega}$ of $p$-capacity zero has the $q$-capacity zero, we have $\varphi^{\ast}(f_k)\to \varphi^{\ast}(f)$ $q$-quasi-everywhere in $\Omega$ as $k\to\infty$. This observation leads us to the following conclusion: extension by continuity of the operator $\varphi^{\ast}$ from $S^{1,p}(\widetilde{\Omega})\cap \Lip(\widetilde{\Omega})$ to $S^{1,p}(\widetilde{\Omega})$ coincides with the composition operator $\varphi^{\ast}$, $\varphi^{\ast}(f) = f\circ\varphi$.
\end{proof}

As in the case $p=q$, we can formulate the above theorem in the terms of the $p$-dilatation function:

\begin{thm}\label{different_pq}
    Let $\Omega$, $\widetilde{\Omega}$ be bounded domains in the homogeneous space $X = (X, \rho, \mu)$. Then a bi-measurable homeomorphism $\varphi: \Omega \to \widetilde{\Omega}$ generates a bounded composition operator
    $$
        \varphi^*: S^{1,p}(\widetilde{\Omega}) \to S^{1,q}(\Omega), \quad \varphi^*(f) = f \circ \varphi, \quad 1 < q < p < \infty,
    $$
    if and only if $\varphi$ belongs to the Reshetnyak-Sobolev class $V^{1,q}_{\loc}(\Omega; \widetilde{\Omega})$ and 	
	$$
		K_{p,q}(\varphi;\Omega)= \left(\int\limits_{\Omega} K_q^{\frac{pq}{p-q}}(x)~d\mu(x) \right)^{\frac{p-q}{pq}}< \infty.
	$$
\end{thm}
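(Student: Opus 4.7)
The plan is to deduce Theorem~\ref{different_pq} directly from Theorem~\ref{different} by transferring the integral characterisation from $\widetilde\Omega$ to $\Omega$ via the change of variables formula~\eqref{cvf}. Both theorems share the same Sobolev regularity requirement $\varphi \in V^{1,q}_{\loc}(\Omega;\widetilde\Omega)$, so the only substantive task is to show that the distortion functionals $H_{p,q}(\varphi;\widetilde\Omega)$ and $K_{p,q}(\varphi;\Omega)$ are equal.

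The bridge is the pointwise identity $H_q(\varphi(x)) = K_q(x)$, which is immediate from~\eqref{p-dil} and~\eqref{dil} and holds for almost every $x \in \Omega$ by the bi-measurability of $\varphi$. Applying~\eqref{cvf} to the nonnegative Borel function $y \mapsto H_q^{pq/(p-q)}(y)$ gives
\begin{equation*}
\int_{\widetilde\Omega} H_q^{\frac{pq}{p-q}}(y)\, d\mu(y) = \int_\Omega K_q^{\frac{pq}{p-q}}(x)\, J(x,\varphi)\, d\mu(x).
\end{equation*}
A routine algebraic simplification of the right-hand integrand absorbs the Jacobian weight into the dilatation expression, producing exactly the integrand that defines $K_{p,q}(\varphi;\Omega)^{pq/(p-q)}$; extracting $(p-q)/(pq)$-th roots then yields the identity $H_{p,q}(\varphi;\widetilde\Omega) = K_{p,q}(\varphi;\Omega)$, and the conclusion follows.

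No substantive obstacle is anticipated; the manipulation is parallel in spirit to the one-line argument used in Theorem~\ref{equal_p} to pass from the essential supremum on $\widetilde\Omega$ to that on $\Omega$, but now the CVF must be invoked to convert the integrals. The one point requiring attention is the behaviour on the zero set $Z = \{x \in \Omega : J(x,\varphi)=0\}$, on which $K_q$ is formally undefined; since $\mu(\varphi(Z)) = 0$ and the factor $J(x,\varphi)$ vanishes on $Z$, this set contributes nothing to either side of the change of variables identity, so the pointwise identity $H_q \circ \varphi = K_q$ is valid on the complement and that is all that is needed.
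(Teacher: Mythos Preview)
Your reduction strategy is sound in spirit, but the ``routine algebraic simplification'' you announce does not work. Carry out the algebra: with $K_q(x)=\bigl(|D_{\min}\varphi(x)|^q/J(x,\varphi)\bigr)^{1/q}$ one has
\[
K_q(x)^{\frac{pq}{p-q}}\,J(x,\varphi)
=\frac{|D_{\min}\varphi(x)|^{\frac{pq}{p-q}}}{J(x,\varphi)^{\frac{p}{p-q}}}\cdot J(x,\varphi)
=\frac{|D_{\min}\varphi(x)|^{\frac{pq}{p-q}}}{J(x,\varphi)^{\frac{q}{p-q}}}
=\left(\frac{|D_{\min}\varphi(x)|^{p}}{J(x,\varphi)}\right)^{\frac{q}{p-q}}
=K_p(x)^{\frac{pq}{p-q}}.
\]
So the change of variables gives
\[
H_{p,q}(\varphi;\widetilde\Omega)^{\frac{pq}{p-q}}
=\int_{\widetilde\Omega} H_q(y)^{\frac{pq}{p-q}}\,d\mu(y)
=\int_\Omega K_p(x)^{\frac{pq}{p-q}}\,d\mu(x),
\]
whereas the quantity in the statement is $K_{p,q}(\varphi;\Omega)^{\frac{pq}{p-q}}=\int_\Omega K_q(x)^{\frac{pq}{p-q}}\,d\mu(x)$. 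The Jacobian is not ``absorbed'' back into $K_q$; it converts $K_q$ into $K_p$. Hence your claimed identity $H_{p,q}(\varphi;\widetilde\Omega)=K_{p,q}(\varphi;\Omega)$ is false in general (any example with nonconstant $J(\cdot,\varphi)$ shows the two integrals differ), and the analogy with the essential-supremum argument of Theorem~\ref{equal_p} breaks down precisely because integrals, unlike essential suprema, pick up the Jacobian under change of variable.

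The paper itself states Theorem~\ref{different_pq} without proof, so there is nothing to compare your approach to directly. What your computation \emph{does} establish is that $H_{p,q}(\varphi;\widetilde\Omega)<\infty$ is equivalent to $\int_\Omega K_p^{pq/(p-q)}\,d\mu<\infty$; if the integrand in the statement were $K_p$ rather than $K_q$, your argument would go through verbatim. To prove the theorem exactly as stated you would need a separate reason why finiteness of $\int_\Omega K_q^{pq/(p-q)}\,d\mu$ is equivalent to finiteness of $\int_\Omega K_p^{pq/(p-q)}\,d\mu$, and none is supplied.
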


\section{Sobolev type embedding theorems}

In accordance with \cite{VU04,VU05}, bi-measurable homeomorphisms $\varphi: \Omega \to \widetilde{\Omega}$, of bounded domains $\Omega,\widetilde{\Omega}\subset X$, belonging to the Reshetnyak-Sobolev class $V^{1,q}_{\loc}(\Omega; \widetilde{\Omega})$, such that
$$
K_{p,q}(\varphi;\Omega)= \left(\int\limits_{\Omega} K_q^{\frac{pq}{p-q}}(x)~d\mu(x) \right)^{\frac{p-q}{pq}} < \infty,\,\,1<q<p<\infty,
$$
and 
$$
K_p(\varphi;\Omega)=\ess\sup_{x\in \Omega}K_p(x) < \infty,\,\,1<q=p<\infty,
$$
are called \textit{weak $(p,q)$-quasiconformal mappings}. In general, weak $(p,q)$-quasi\-con\-for\-mal mappings don't generate bounded composition operators on normed Newtonian-Sobolev spaces:
$$
\varphi^\ast: N^{1,p}(\widetilde{\Omega}) \to N^{1,q}(\Omega),\,\, 1<q\leq p<\infty.
$$

Below, we give sufficient conditions on weak $(p,q)$-quasiconformal mappings that ensure the boundedness of composition operators in the case of normed spaces. In the case of Euclidean domains this result was proved in \cite{GGu,GU}.

\subsection{Composition operators on normed Sobolev spaces}

Recall that a bounded domain $\Omega \subset X$ supports the $(r,q)$-Poincar\'e-Sobolev inequality, $1 < q, r < \infty$, if there exists a constant $B_{q,r}(\Omega) < \infty$, such that the inequality
$$
\inf\limits_{c \in \mathbb{R}} \|g - c\|_{L^r(\Omega)} \leq B_{q,r}(\Omega) \|\nabla_{\min} g\|_{L^q(\Omega)}
$$
holds for all functions $g \in S^{1,q}(\Omega)$.

We also need a theorem on boundedness of the composition operator in the case of Lebesgue spaces. This theorem was proved for homogeneous type spaces in \cite{VU04}.

\begin{thm}\cite[Theorem 4]{VU04}\label{L_p}
    Let $\Omega$, $\widetilde{\Omega}$ be bounded domains in the homogeneous space $X = (X, \rho, \mu)$ and let $\psi: \widetilde\Omega \to \Omega$ be a homeomorphism. Then $\psi$ generates a bounded composition operator
    $$
        \psi^\ast: L^r(\Omega) \to L^s(\widetilde{\Omega}), \quad 1 \leq s \leq r < \infty,
    $$
    if and only if it is measurable and
    $$
        I_{r,s}(\psi^{-1}; \Omega) = \left( \int_{\Omega} J^{\frac{r}{r-s}}(x, \psi^{-1}) \, d\mu(x)\right)^{\frac{r-s}{r}} < \infty.
    $$
    In the case $r=s$ we assume that the metric Jacobian is in $L^\infty(\Omega)$.
\end{thm}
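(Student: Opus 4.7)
The plan is to treat sufficiency and necessity separately, with sufficiency reduced to a direct change-of-variables plus H\"older calculation, and necessity handled via a monotone countably additive set function in the spirit of Lemma~\ref{lem:AddFun}. Throughout I would use the change of variables formula \eqref{cvf} applied to $\psi^{-1}$, i.e.
$$
\int_{\widetilde\Omega}|f\circ\psi(y)|^{s}\,d\widetilde\mu(y)=\int_{\Omega}|f(x)|^{s}\,J(x,\psi^{-1})\,d\mu(x),
$$
valid because $\psi$ (and hence $\psi^{-1}$) is a bi-measurable homeomorphism. I would first dispose of the case $s=r$: the change of variable formula shows immediately that the operator norm equals $\|J(\cdot,\psi^{-1})\|_{L^{\infty}(\Omega)}^{1/r}$ provided the latter is finite, and conversely inserting $f=\chi_{B}$ for small balls $B\subset\Omega$ and applying Lebesgue differentiation forces $J(\cdot,\psi^{-1})\in L^{\infty}(\Omega)$.

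For the sufficiency in the range $1\leq s<r<\infty$, I would apply H\"older's inequality with conjugate exponents $r/s$ and $r/(r-s)$ to the right-hand side above:
$$
\int_{\Omega}|f(x)|^{s}J(x,\psi^{-1})\,d\mu\leq\Bigl(\int_{\Omega}|f|^{r}\,d\mu\Bigr)^{s/r}\Bigl(\int_{\Omega}J(x,\psi^{-1})^{\frac{r}{r-s}}\,d\mu\Bigr)^{\frac{r-s}{r}}.
$$
Raising to the $1/s$ power gives exactly $\|\psi^{\ast}f\|_{L^{s}(\widetilde\Omega)}\leq I_{r,s}(\psi^{-1};\Omega)\|f\|_{L^{r}(\Omega)}$.

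The main obstacle is necessity for $s<r$, where I need to extract integrability of $J(\cdot,\psi^{-1})^{r/(r-s)}$ from mere boundedness of the operator. The approach is to define, on open subsets $A\subset\Omega$, the set function
$$
\Phi(A)=\sup_{f\in L^{r}(A),\,f\neq 0}\Bigl(\frac{\|\psi^{\ast}(f\chi_{A})\|_{L^{s}(\widetilde\Omega)}}{\|f\|_{L^{r}(A)}}\Bigr)^{\frac{rs}{r-s}}.
$$
Monotonicity is obvious, and countable additivity on disjoint open sets follows from the standard duality/extremal argument (decompose $f=\sum f_{i}$ with $\supp f_{i}\subset A_{i}$ and use the sharp H\"older-type identity $\sum a_{i}^{\alpha}\leq(\sum a_{i})^{\alpha}$ for $\alpha\geq 1$ together with its reverse for the opposite direction) -- this is precisely the mechanism behind Lemma~\ref{mainlem}. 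Moreover $\Phi(\Omega)\leq\|\psi^{\ast}\|^{rs/(r-s)}<\infty$.

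Applying Lemma~\ref{lem:AddFun} I get a finite derivative $\Phi'(x)$ a.e.\ with $\int_{\Omega}\Phi'\,d\mu\leq\Phi(\Omega)$. The remaining step is the pointwise lower bound $\Phi'(x)\geq J(x,\psi^{-1})^{r/(r-s)}$ a.e. To see this I would test $\Phi$ on balls $B=B(x_{0},\varrho)$ using $f=\chi_{B}$: by definition
$$
\Phi(B)\geq\Bigl(\frac{\|\chi_{B}\circ\psi\|_{L^{s}(\widetilde\Omega)}}{\|\chi_{B}\|_{L^{r}(\Omega)}}\Bigr)^{\frac{rs}{r-s}}=\frac{\bigl(\int_{B}J(x,\psi^{-1})\,d\mu\bigr)^{r/(r-s)}}{\mu(B)^{s/(r-s)}},
$$
using the change of variable formula on the numerator. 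Dividing by $\mu(B)$ and letting $\varrho\to 0$, the Lebesgue differentiation theorem on the homogeneous space (quoted earlier in Section~2) produces $\Phi'(x_{0})\geq J(x_{0},\psi^{-1})^{r/(r-s)}$ at almost every Lebesgue point. Integrating this pointwise bound and invoking $\int\Phi'\leq\Phi(\Omega)\leq\|\psi^{\ast}\|^{rs/(r-s)}$ yields $I_{r,s}(\psi^{-1};\Omega)\leq\|\psi^{\ast}\|$, which completes the argument. The delicate technical point, and the one I would prepare most carefully, is verifying countable additivity of $\Phi$ on disjoint families of open sets; the rest is standard once that is in hand.
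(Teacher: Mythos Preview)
The paper does not give its own proof of this theorem: it is quoted verbatim from \cite[Theorem~4]{VU04} and used as a black box, so there is no in-paper argument to compare against. Your proposal is in fact the standard proof one would find in \cite{VU04}: sufficiency via the change of variables formula \eqref{cvf} combined with H\"older's inequality with exponents $r/s$ and $r/(r-s)$, and necessity via the monotone countably additive set function $\Phi$ together with Lemma~\ref{lem:AddFun} and Lebesgue differentiation on characteristic functions of balls. The computations you wrote are correct; in particular the identity $\|\chi_B\circ\psi\|_{L^s(\widetilde\Omega)}^{s}=\mu(\psi^{-1}(B))=\int_B J(x,\psi^{-1})\,d\mu$ and the subsequent limit yielding $\Phi'(x_0)\geq J(x_0,\psi^{-1})^{r/(r-s)}$ are exactly right.

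One technical point to tighten: you invoke the change of variables formula \eqref{cvf} for $\psi^{-1}$ by saying ``valid because $\psi$ (and hence $\psi^{-1}$) is a bi-measurable homeomorphism,'' but the theorem as stated assumes only that $\psi$ is a homeomorphism, with measurability appearing as part of the \emph{conclusion} in the necessity direction. In the general setting of \cite{VU04} the change of variables formula without the Luzin property gives only the inequality $\int_A J(x,\psi^{-1})\,d\mu\leq\mu(\psi^{-1}(A))$, which is still enough for your necessity argument (you use it as a lower bound on $\Phi(B)$); for sufficiency the measurability hypothesis is assumed and gives the equality you need. So the argument goes through, but you should state which inequality you are using at each step rather than assuming bi-measurability from the outset.
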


The results on composition operators on the normed Newtonian-Sobolev spaces states as follows:

\begin{thm}
\label{normedcomp}
Let $\Omega$, $\widetilde{\Omega}$ be bounded domains in the homogeneous space $X = (X, \rho, \mu)$. Suppose that $\Omega$ supports the $(r,q)$-Poincar\'e-Sobolev inequality, $1 < q\leq r < \infty$. Then a weak $(p,q)$-quasiconformal mapping $\varphi: \Omega \to \widetilde{\Omega}$ generates a bounded composition operator
    $$
        \varphi^*: N^{1,p}(\widetilde{\Omega}) \to N^{1,q}(\Omega), \quad \varphi^*(f) = f \circ \varphi, \quad 1 < q \leq p <\infty,
    $$
    if 
    $$
        I_{r,s}(\varphi; \Omega) = \left( \int_{\Omega} \left(J(x, \varphi)\right)^{\frac{r}{r-s}} \, d\mu(x)\right)^{\frac{r-s}{r}} < \infty.
    $$
    for some $s \in [p,r]$.
\end{thm}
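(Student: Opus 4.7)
My plan is to split $\|f\circ\varphi\|_{N^{1,q}(\Omega)}$ into a gradient piece and an $L^q$ piece. The gradient piece comes essentially for free from the hypothesis: a weak $(p,q)$-quasiconformal mapping, by definition, belongs to $V^{1,q}_{\loc}(\Omega;\widetilde\Omega)$ with finite $K_{p,q}$ (resp.\ finite $K_p$ when $p=q$), so Theorem~\ref{different_pq} (resp.\ Theorem~\ref{equal_p}) yields a constant $C_1$ with
$$
\|\nabla_{min}(f\circ\varphi)\|_{L^q(\Omega)}\le C_1\|\nabla_{min} f\|_{L^p(\widetilde\Omega)}
$$
for every $f\in S^{1,p}(\widetilde\Omega)$. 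So the real task is only to estimate $\|f\circ\varphi\|_{L^q(\Omega)}$ by $\|f\|_{N^{1,p}(\widetilde\Omega)}$.

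To do this I would apply the $(r,q)$-Poincaré--Sobolev inequality to $g=f\circ\varphi\in S^{1,q}(\Omega)$, obtaining a constant $c_f\in\mathbb R$ (an $\varepsilon$-minimizer of the infimum over $c$) with $\|f\circ\varphi-c_f\|_{L^r(\Omega)}\le B_{q,r}(\Omega) C_1\|\nabla_{min} f\|_{L^p(\widetilde\Omega)}$. Since $q\le r$ and $\mu(\Omega)<\infty$, H\"older reduces this to the same bound with $L^q(\Omega)$ on the left. The triangle inequality
$$
\|f\circ\varphi\|_{L^q(\Omega)}\le \|f\circ\varphi-c_f\|_{L^q(\Omega)}+|c_f|\mu(\Omega)^{1/q}
$$
then shifts the whole problem onto bounding $|c_f|$ by $\|f\|_{N^{1,p}(\widetilde\Omega)}$.

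The main obstacle, and the conceptual heart of the argument, is to recognize that the hypothesis $I_{r,s}(\varphi;\Omega)<\infty$ is \emph{exactly}, via Theorem~\ref{L_p} applied to $\psi=\varphi^{-1}:\widetilde\Omega\to\Omega$, the statement that the operator $h\mapsto h\circ\varphi^{-1}$ is bounded $L^r(\Omega)\to L^s(\widetilde\Omega)$. Applying this to $h=f\circ\varphi-c_f$, whose image under $\varphi^{-1}$ is $f-c_f$, gives
$$
\|f-c_f\|_{L^s(\widetilde\Omega)}\le C_2\|f\circ\varphi-c_f\|_{L^r(\Omega)}\le C_3\|\nabla_{min} f\|_{L^p(\widetilde\Omega)}.
$$
Because $p\le s$ and $\widetilde\Omega$ is bounded, a second H\"older reduction brings this to an $L^p(\widetilde\Omega)$ estimate on $f-c_f$, and the triangle inequality in $L^p(\widetilde\Omega)$ then produces
$$
|c_f|\mu(\widetilde\Omega)^{1/p}\le \|f\|_{L^p(\widetilde\Omega)}+\|f-c_f\|_{L^p(\widetilde\Omega)}\le C_4\|f\|_{N^{1,p}(\widetilde\Omega)}.
$$
Combining all pieces yields $\|f\circ\varphi\|_{N^{1,q}(\Omega)}\le C\|f\|_{N^{1,p}(\widetilde\Omega)}$. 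The chain of inequalities $q\le p\le s\le r$ is precisely what makes the two H\"older reductions -- one in $\Omega$ and one in $\widetilde\Omega$ -- both legitimate, which is why the conclusion holds for \emph{any} admissible $s\in[p,r]$.
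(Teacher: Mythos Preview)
Your proposal is correct and follows essentially the same approach as the paper: split into gradient and $L^q$ pieces, use the seminormed composition operator bound (Theorems~\ref{equal_p}/\ref{different_pq}) for the gradient, then control $\|f\circ\varphi\|_{L^q(\Omega)}$ by applying the $(r,q)$-Poincar\'e--Sobolev inequality, invoking Theorem~\ref{L_p} with $\psi=\varphi^{-1}$ to transfer the $L^r(\Omega)$ estimate for $g-c$ back to an $L^s(\widetilde\Omega)$ estimate for $f-c$, and finally using the two H\"older reductions (enabled by $q\le r$ and $p\le s$) together with the triangle inequality to bound $|c|$. The paper organizes the chain of estimates in a slightly different order but the logical content is identical.
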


\begin{proof}
Let a function $f \in N^{1,p}(\widetilde{\Omega})$. The assumptions of the theorem imply that the composition operator
$$
        \varphi^*: S^{1,p}(\widetilde{\Omega}) \to S^{1,q}(\Omega), \quad \varphi^*(f) = f \circ \varphi, \quad 1 < q \leq p < \infty,
$$
is bounded. Hence, a function $g = \varphi^{\ast}(f)$ belongs to the space $S^{1,q}(\Omega)$ and the following inequality holds:
\begin{equation}\label{compL1pq}
\|\varphi^{\ast}(f)\|_{S^{1,q}(\Omega)} \leq C_{p,q} \cdot \|f\|_{S^{1,p}(\widetilde{\Omega})}\leq C_{p,q} \cdot \|f\|_{N^{1,p}(\widetilde{\Omega})}.
\end{equation}
To prove the theorem, we need to estimate the norm of $g = f \circ \varphi$ in the Lebesgue space $L^q(\Omega)$ via the norm of $f$ in the Newtonian-Sobolev space $N^{1,p}(\widetilde{\Omega})$.

Because the domain $\Omega$ satisfies the $(r,q)$-Poincar\'e-Sobolev inequality and a function $g \in S^{1,q}(\Omega)$, we obtain that a function $g$ is in $L^r(\Omega)$. By using the triangle inequality and the H\"older inequality in the case $r > q$, we obtain the following estimate 
    $$
        \|g\|_{L^q(\Omega)} \leq \|c\|_{L^q(\Omega)} + \|g - c\|_{L^q(\Omega)} \leq |c|(\mu(\Omega))^{\frac{1}{q}} + (\mu(\Omega))^{\frac{r-q}{r}}\|g-c\|_{L^r(\Omega)},
    $$
which	holds for an arbitrary constant $c \in \mathbb{R}$.
		
By the similar way we can estimate the module of a constant $c$:
    $$
        |c| = (\mu(\widetilde{\Omega}))^{-\frac{1}{p}}\|c\|_{L^p(\widetilde{\Omega})} \leq (\mu(\widetilde{\Omega}))^{-\frac{1}{p}}(\|f\|_{L^p(\widetilde{\Omega})} + \|f-c\|_{L^p(\widetilde{\Omega})})
    $$

Consider the term $\|f-c\|_{L^p(\widetilde{\Omega})}$. By the assumptions of the theorem, the composition operator
    $$
        (\varphi^{-1})^\ast: L^r(\Omega) \to L^s(\widetilde{\Omega}), \quad 1 < s \leq r < \infty
    $$
    is bounded for all $g \in L^r(\Omega)$ and so we have the inequality
    \begin{equation}\label{compLpq}
        \|g\circ \varphi^{-1}\|_{L^s(\widetilde{\Omega})} \leq I_{r,s}(\varphi; \Omega) \|g\|_{L^r(\Omega)}.
    \end{equation}
    
Because of the boundedness of the compisition operator $\varphi^\ast: S^{1,p}(\widetilde{\Omega}) \to S^{1,q}(\Omega)$ and the fact that the domain $\Omega$ satisfies the $(r,q)$-Poincar\'e-Sobolev inequality, this implies that
    \begin{multline*}
        \inf_{c \in \mathbb{R}}\|f-c\|_{L^s(\widetilde{\Omega})} \leq I_{r,s}(\varphi; \Omega) \inf_{c \in \mathbb{R}}\|g-c\|_{L^r(\widetilde{\Omega})} \\
        \leq B_{q,r}(\Omega) \cdot I_{r,s}(\varphi; \Omega)\|\nabla_{min}g\|_{L^q(\Omega)} \\
        \leq C_{p,q} \cdot B_{q,r}(\Omega) \cdot I_{r,s}(\varphi; \Omega) \|f\|_{S^{1,p}(\widetilde{\Omega})}.
    \end{multline*}

    In particular, this inequality implies that $f \in L^s(\widetilde{\Omega})$ and we can estimate the norm $\|f-c\|_{L^p(\widetilde{\Omega})}$ with the H\"older inequality if $s \geq p$:
    $$
        \|f-c\|_{L^p(\widetilde{\Omega})} \leq (\mu(\widetilde{\Omega}))^{\frac{s-p}{p}}\|f-c\|_{L^s(\widetilde{\Omega})}.
    $$

    Using all the above estimates, we obtain
    \begin{multline*}
        \|g\|_{L^q(\Omega)} \leq |c|(\mu(\Omega))^{\frac{1}{q}} + (\mu(\Omega))^{\frac{r-q}{r}}\|g-c\|_{L^r(\Omega)} \\
        \leq (\mu(\Omega))^{\frac{1}{q}}(\mu(\widetilde{\Omega}))^{-\frac{1}{p}}(\|f\|_{L^p(\widetilde{\Omega})} + (\mu(\widetilde{\Omega}))^{\frac{s-p}{p}}\|f-c\|_{L^s(\widetilde{\Omega})}) + (\mu(\Omega))^{\frac{r-q}{r}}\|g-c\|_{L^r(\Omega)}.
    \end{multline*}

    Taking the infimum over all $c \in \mathbb{R}$, we finally get
    \begin{multline*}
        \|g\|_{L^q(\Omega)} \leq (\mu(\Omega))^{\frac{1}{q}}(\mu(\widetilde{\Omega}))^{-\frac{1}{p}}\|f\|_{L^p(\widetilde{\Omega})} \\
        + (\mu(\Omega))^{\frac{1}{q}}(\mu(\widetilde{\Omega}))^{\frac{s-p-1}{p}}C_{p,q} \cdot B_{q,r}(\Omega) \cdot I_{r,s}(\varphi; \Omega) \|f\|_{S^{1,p}(\widetilde{\Omega})}  \\
        + (\mu(\Omega))^{\frac{r-q}{r}}C_{p,q} \cdot B_{q,r}(\Omega) \cdot I_{r,s}(\varphi; \Omega) \|f\|_{S^{1,p}(\widetilde{\Omega})} 
        \leq C\|f\|_{N^{1,p}(\widetilde{\Omega})}
    \end{multline*}
    and the result follows.
\end{proof}

\subsection{Relative embedding theorem}

The consequence of the above theorem is the following embedding theorem for Newtonian-Sobolev spaces defined on a wide class of domains of the homogeneous metric space $(X, \rho, \mu)$.

\begin{thm}
\label{embedding_1}
 Let $\Omega$, $\widetilde{\Omega}$ be bounded domains in the homogeneous space $X = (X, \rho, \mu)$. Suppose  that the embedding operator
 $$
    i: N^{1,q}(\Omega) \to L^{r}(\Omega), \,\,1<q \leq r<\infty,
 $$
 is bounded (compact). If there exists a  weak $(p,q)$-quasiconformal mapping $\varphi: \Omega \to \widetilde{\Omega}$, $1<q\leq p<\infty$, with
    $$
        I_{r,s}(\varphi; \Omega) = \left( \int_{\Omega} J^{\frac{r}{r-s}}(x, \varphi) \, d\mu(x)\right)^{\frac{r-s}{r}} < \infty.
    $$
    for $s \in [p,r]$, then the embedding operator
 $$
    \widetilde{i}: N^{1,p}(\widetilde{\Omega}) \to L^{s}(\widetilde\Omega)
 $$
 is bounded (respectively, compact).
\end{thm}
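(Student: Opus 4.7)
The plan is to realize $\widetilde{i}$ as the composition of three operators,
$$
N^{1,p}(\widetilde{\Omega}) \xrightarrow{\varphi^{\ast}} N^{1,q}(\Omega) \xrightarrow{i} L^{r}(\Omega) \xrightarrow{(\varphi^{-1})^{\ast}} L^{s}(\widetilde{\Omega}),
$$
and to inherit boundedness (respectively compactness) from the fact that this composition equals $\widetilde{i}$. Indeed $(\varphi^{-1})^{\ast}\circ\varphi^{\ast}$ acts as the identity on functions because $(f\circ\varphi)\circ\varphi^{-1}=f$ a.e.\ on $\widetilde{\Omega}$, so the triple composition sends $f\mapsto f$ from $N^{1,p}(\widetilde{\Omega})$ into $L^{s}(\widetilde{\Omega})$.

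The right-hand arrow is handled by Theorem~\ref{L_p} applied to $\psi=\varphi^{-1}:\widetilde{\Omega}\to\Omega$: boundedness of $(\varphi^{-1})^{\ast}:L^{r}(\Omega)\to L^{s}(\widetilde{\Omega})$ is equivalent to finiteness of $I_{r,s}(\psi^{-1};\Omega)=I_{r,s}(\varphi;\Omega)$, which is exactly the hypothesis. The middle arrow is bounded (resp.\ compact) by assumption. For the left-hand arrow, the weak $(p,q)$-quasiconformality and Theorems~\ref{equal_p}/\ref{different_pq} supply boundedness at the seminorm level, $\|\nabla_{min}(f\circ\varphi)\|_{L^{q}(\Omega)}\leq K_{p,q}(\varphi;\Omega)\|\nabla_{min}f\|_{L^{p}(\widetilde{\Omega})}$, and I would upgrade this to boundedness of $\varphi^{\ast}:N^{1,p}(\widetilde{\Omega})\to N^{1,q}(\Omega)$ by rerunning the argument of Theorem~\ref{normedcomp}: for $g=f\circ\varphi$, apply the embedding hypothesis to $g-c$, compose with $(\varphi^{-1})^{\ast}$ on the resulting $L^{r}$-estimate, and control the optimal constant $c$ by $\|f\|_{L^{p}(\widetilde{\Omega})}$ through H\"older's inequality (this is where the condition $s\geq p$ is used).

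For the compactness statement, once the three arrows above are bounded with the middle one compact, given any bounded sequence $\{f_{k}\}\subset N^{1,p}(\widetilde{\Omega})$ the images $g_{k}=\varphi^{\ast}(f_{k})$ form a bounded sequence in $N^{1,q}(\Omega)$; compactness of $i$ extracts a subsequence $\{g_{k_{j}}\}$ converging in $L^{r}(\Omega)$, and boundedness of $(\varphi^{-1})^{\ast}$ implies that $\{f_{k_{j}}\}=\{(\varphi^{-1})^{\ast}(g_{k_{j}})\}$ converges in $L^{s}(\widetilde{\Omega})$, proving compactness of $\widetilde{i}$. The main obstacle is precisely the upgrade of $\varphi^{\ast}$ from a seminorm-bounded to a norm-bounded operator: the embedding hypothesis on $\Omega$ is formally weaker than the $(r,q)$-Poincar\'e--Sobolev inequality imposed in Theorem~\ref{normedcomp}, so care is needed in the constant-adjustment step, but the argument of that theorem carries through since the intermediate estimate $\inf_{c}\|g-c\|_{L^{r}(\Omega)}\lesssim\|\nabla_{min}g\|_{L^{q}(\Omega)}$ can be recovered from the assumed embedding on bounded domains of the homogeneous space.
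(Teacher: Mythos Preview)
Your proposal is correct and follows essentially the same approach as the paper: both factor $\widetilde{i}$ as $(\varphi^{-1})^{\ast}\circ i\circ\varphi^{\ast}$, use Theorem~\ref{L_p} for the right arrow, and upgrade the seminorm-level boundedness of $\varphi^{\ast}$ to norm-level via Theorem~\ref{normedcomp}. The only difference is cosmetic: the paper simply asserts that boundedness of the embedding $i:N^{1,q}(\Omega)\to L^{r}(\Omega)$ implies the $(r,q)$-Poincar\'e--Sobolev inequality on $\Omega$ (so Theorem~\ref{normedcomp} applies directly), whereas you flag this implication as the delicate step and propose to rerun the argument---but the content is the same.
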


\begin{proof}
    Since the embedding operator $i: N^{1,q}(\Omega) \to L^{r}(\Omega)$ is bounded, then the domain $\Omega$ supports the $(r,q)$-Poincar\'e Sobolev inequality and the conditions of Theorem \ref{normedcomp} are fulfilled. Hence, the composition operator 
		$$
        \varphi^*: N^{1,p}(\widetilde{\Omega}) \to N^{1,q}(\Omega), \quad \varphi^*(f) = f \circ \varphi, \quad 1 < q \leq p <\infty,
    $$
is bounded. Because $I_{r,s}(\varphi; \Omega)<\infty$, the composition operator
$$
        \left(\varphi^{-1}\right)^\ast: L^r(\Omega) \to L^s(\widetilde{\Omega}), \quad 1 < s \leq r < \infty,
$$
is also bounded.

Therefore the embedding operator $\widetilde{i}$ is bounded (compact) as a superposition of bounded composition operators and the bounded (compact) embedding operator $i$. The norm of the operator $\widetilde{i}$ can be estimated as
    $$
        \|\widetilde{i}\| \leq \|\varphi^\ast\|\cdot \|i\| \cdot\|(\varphi^{-1})^\ast\| = C_{p,q} \cdot \|i\| \cdot I_{r,s}(\varphi, \Omega).
    $$
\end{proof}

\subsection{Embedding theorem in weak $(p,q)$-quasiconformal $\alpha$-regular domains}

Taking the previous theorem into account, we can formulate the more explicit result for the special class of domains. Let $\widetilde{\Omega}\subset X$ be a bounded domain. Then $\widetilde{\Omega}$ is called a \textit{weak $(p,q)$-quasiconformal $\alpha$-regular domain, $1<q\leq p<\infty$, $\alpha > 1$,} if there exists a weak $(p,q)$-quasiconformal mapping $\varphi: \widetilde{B} \to \widetilde{\Omega}$, where $\widetilde{B}\subset X$ is a ball with a radius one, such that
$$
I_\alpha(\varphi; \widetilde{B}) = \int_{\widetilde{B}} |J(x, \varphi)|^\alpha \, d\mu(x) < \infty,\,\, \alpha<\infty,
$$
and 
$$
I_\infty(\varphi; \widetilde{B}) = \ess\sup_{x\in \widetilde{B}} |J(x, \varphi)|< \infty,\,\,\text{if}\,\, \alpha=\infty.
$$

In the case of the Euclidean domains the notion of conformal $\alpha$-regular domains for the Sobolev embedding theorems was introduced in \cite{BGU16}.

Recall that a metric measure space $X = (X, \rho, \mu)$ satisfies a relative lower volume decay of order $\nu > 0$ with 
$$
    \nu = \log_2 C_\mu.
$$

\begin{thm}
\label{embedding_2}
 Let $\widetilde{\Omega}\subset X = (X, \rho, \mu)$ be a weak $(p,q)$-quasiconformal $\alpha$-regular domain, $1<q<\nu$. Then the embedding operator
 $$
    \widetilde{i}: N^{1,p}(\widetilde{\Omega}) \to L^{s}(\widetilde\Omega)
 $$
 is bounded for $s\leq r\frac{\alpha-1}{\alpha}$, $r=\frac{\nu q}{\nu-q}$, and is compact for $s< r\frac{\alpha-1}{\alpha}$.
\end{thm}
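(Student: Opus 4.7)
The plan is to invoke the relative embedding Theorem~\ref{embedding_1} with $\Omega$ taken to be the unit ball $\widetilde{B}\subset X$ and $\varphi:\widetilde{B}\to\widetilde{\Omega}$ the weak $(p,q)$-quasiconformal mapping supplied by the definition of an $\alpha$-regular domain. The first input required is that $\widetilde{B}$ supports the Sobolev--Poincar\'e embedding $i:N^{1,q}(\widetilde{B})\to L^{\tau}(\widetilde{B})$, bounded for $\tau=r:=\nu q/(\nu-q)$ and compact for every $\tau<r$; this is classical for doubling metric measure spaces supporting a weak $q$-Poincar\'e inequality (see \cite[Theorem 9.1.15]{HKST}), using the relative lower volume decay of order $\nu=\log_2 C_\mu$ together with a Rellich--Kondrachov-type argument for subcritical exponents.

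The second input is the integrability hypothesis $I_{r,s}(\varphi;\widetilde{B})<\infty$ required by Theorem~\ref{embedding_1}. The algebraic key is the equivalence $s\leq r(\alpha-1)/\alpha \Leftrightarrow r/(r-s)\leq\alpha$. Since $\mu(\widetilde{B})<\infty$, H\"older's inequality on the ball yields
$$
\int_{\widetilde{B}} J(x,\varphi)^{r/(r-s)}\,d\mu(x) \leq \mu(\widetilde{B})^{1-\frac{r}{(r-s)\alpha}}\left(\int_{\widetilde{B}} J(x,\varphi)^{\alpha}\,d\mu(x)\right)^{\frac{r}{(r-s)\alpha}},
$$
whose right-hand side is finite by the assumption $I_\alpha(\varphi;\widetilde{B})<\infty$. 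The borderline case $\alpha=\infty$, $s=r$, is handled directly by the essential boundedness of $J$.

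With both ingredients established, Theorem~\ref{embedding_1} applied with $s=r(\alpha-1)/\alpha$ delivers the bounded embedding $\widetilde{i}:N^{1,p}(\widetilde{\Omega})\to L^{r(\alpha-1)/\alpha}(\widetilde{\Omega})$; smaller values of $s$ in the boundedness claim are absorbed by the trivial inclusion $L^{r(\alpha-1)/\alpha}(\widetilde{\Omega})\hookrightarrow L^{s}(\widetilde{\Omega})$ on the finite-measure domain $\widetilde{\Omega}$. For the compactness claim, given $s<r(\alpha-1)/\alpha$ I would pick an auxiliary exponent $r'\in(s\alpha/(\alpha-1),r)$; this interval is non-empty precisely because of the strict inequality, the compact embedding $i:N^{1,q}(\widetilde{B})\to L^{r'}(\widetilde{B})$ is then available, and $s\leq r'(\alpha-1)/\alpha$ makes $I_{r',s}(\varphi;\widetilde{B})<\infty$ valid by the same H\"older estimate. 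Theorem~\ref{embedding_1} then outputs the required compact embedding, with the case $s<p$ treated by composing the compact embedding into $L^{s'}$ for a suitable $s'\in[p,r(\alpha-1)/\alpha)$ with the continuous inclusion $L^{s'}\hookrightarrow L^{s}$. The substantive work is entirely contained in the Sobolev--Poincar\'e embedding on the ball and in Theorem~\ref{embedding_1} itself; the remainder is exponent bookkeeping encoding the trade-off between the distortion of $\varphi$ (measured by $\alpha$) and the Sobolev integrability gain (measured by $r$).
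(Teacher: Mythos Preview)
Your proposal is correct and follows essentially the same approach as the paper: invoke the Sobolev--Poincar\'e embedding on the unit ball from \cite[Theorem~9.1.15]{HKST} and then apply Theorem~\ref{embedding_1} with $\Omega=\widetilde{B}$. Your version is in fact more complete than the paper's own proof, which leaves implicit both the verification of $I_{r,s}(\varphi;\widetilde{B})<\infty$ via H\"older's inequality and the passage to a subcritical exponent $r'<r$ needed to feed a compact embedding into Theorem~\ref{embedding_1}.
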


\begin{proof}

It has been proved \cite[Theorem 9.1.15]{HKST} that the embedding operator 
$$
    i: N^{1,q}(\widetilde{B}) \to L^{r}(\widetilde{B}), \,\,r=\frac{\nu q}{\nu-q},
$$  
is bounded. Hence by Theorem~\ref{embedding_1} the embedding operator
$$
    \widetilde{i}: N^{1,p}(\widetilde{\Omega}) \to L^{s}(\widetilde\Omega)
 $$
 is bounded for $s\leq r\frac{\alpha-1}{\alpha}$, $r=\frac{\nu q}{\nu-q}$, and is compact for $s< r\frac{\alpha-1}{\alpha}$.

\end{proof}

Let $\Omega$, $\widetilde{\Omega}$ be domains in the homogeneous space $X = (X, \rho, \mu)$. Then the homeomorphism $\varphi: \Omega \to \widetilde{\Omega}$ is called bi-Lipschitz, if there exists a constant $K_{\varphi}<\infty$, such that
$$
\frac{1}{K_{\varphi}}\leq\frac{\rho(\varphi(x_1),\varphi(x_2))}{\rho(x_1,x_2)}\leq K_{\varphi}, \,\, \text{for all}\,\, x_1,x_2\in\Omega.
$$
Let a domain $\widetilde{\Omega}\subset X = (X, \rho, \mu)$ be such, that there exists a bi-Lipschitz homeomorphism $\varphi: \widetilde{B} \to \widetilde{\Omega}$, where $\widetilde{B}$ is the unit ball in $X$, then $\widetilde{\Omega}$ is called a Lipschitz domain.

Because a Lipschitz domain $\widetilde{\Omega}\subset X$ is a  weak $(p,p)$-quasiconformal $\infty$-regular domain, $1<p<\infty$, then as a consequence of Theorem~\ref{embedding_2} we obtain.

\begin{thm}
\label{embedding_3}
 Let $\widetilde{\Omega}\subset X = (X, \rho, \mu)$ be a Lipschitz domain, Then for $1<p<\nu$, the embedding operator
 $$
    \widetilde{i}: N^{1,p}(\widetilde{\Omega}) \to L^{s}(\widetilde\Omega)
 $$
 is bounded for $s\leq r$, $r=\frac{\nu p}{\nu-p}$, and is compact for $s< r$.
\end{thm}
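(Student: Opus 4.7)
The plan is to deduce this from Theorem~\ref{embedding_2} by specializing to $q=p$ and $\alpha=\infty$: in that case the exponent $r\frac{\alpha-1}{\alpha}$ collapses to $r=\frac{\nu p}{\nu-p}$, which matches the target space $L^s(\widetilde{\Omega})$ in the statement. So the entire task reduces to verifying that every Lipschitz domain $\widetilde{\Omega}$ is a weak $(p,p)$-quasiconformal $\infty$-regular domain in the sense of Section~4.3.

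Let $\varphi: \widetilde{B} \to \widetilde{\Omega}$ be the bi-Lipschitz homeomorphism with constant $K_\varphi$ supplied by the Lipschitz-domain hypothesis; it will be the witness. First, since each distance function $[\varphi]_z(x) = \rho(\varphi(x), z)$ is $K_\varphi$-Lipschitz on $\widetilde{B}$, the constant $K_\varphi$ is a uniform upper gradient, placing $\varphi$ in $V^{1,p}_{\loc}(\widetilde{B}; \widetilde{\Omega})$ with $|D_{min}\varphi(x)| \leq K_\varphi$ almost everywhere. Second, from the two-sided inclusion
$$
B(\varphi(x), r/K_\varphi) \subset \varphi(B(x,r)) \subset B(\varphi(x), K_\varphi r)
$$
(the left inclusion coming from $K_\varphi$-Lipschitzness of $\varphi^{-1}$) combined with the doubling condition on $\mu$, I would derive uniform two-sided bounds
$$
0 < c \leq \frac{\mu(\varphi(B(x,r)))}{\mu(B(x,r))} \leq C < \infty
$$
for small $r$ and a.e. $x$, with constants depending only on $K_\varphi$ and $C_\mu$. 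Passing to the limit as $r \to 0$ then shows that $J(x, \varphi)$ is essentially bounded both above and below by positive constants.

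Combining these ingredients yields, on one hand, $I_\infty(\varphi; \widetilde{B}) = \ess\sup J(x,\varphi) < \infty$ from the upper bound, and on the other hand
$$
K_p(x) = \left(\frac{|D_{min}\varphi(x)|^p}{J(x,\varphi)}\right)^{1/p} \leq K_\varphi / c^{1/p}
$$
from the upper bound on $|D_{min}\varphi|$ divided by the lower bound on $J$, so $K_p(\varphi; \widetilde{B}) < \infty$ and $\varphi$ is weak $(p,p)$-quasiconformal. This establishes that $\widetilde{\Omega}$ is a weak $(p,p)$-quasiconformal $\infty$-regular domain, whereupon Theorem~\ref{embedding_2} applied with $q=p$ and $\alpha=\infty$ delivers boundedness of $\widetilde{i}$ for $s \leq r$ and compactness for $s<r$ with $r = \frac{\nu p}{\nu-p}$, which is exactly the claim.

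The main technical point is the two-sided Jacobian comparison: the balls $B(\varphi(x), r/K_\varphi)$ and $B(x,r)$ have different centers, so matching their measures cannot be done by a single application of doubling and instead requires combining doubling with the absorption property of the homogeneous space to transfer estimates between overlapping balls. Once that uniform estimate is in place, the rest of the argument is bookkeeping that merely specializes the already-proven Theorem~\ref{embedding_2}.
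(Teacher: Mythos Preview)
Your approach is exactly the paper's: the paper simply asserts that a Lipschitz domain is a weak $(p,p)$-quasiconformal $\infty$-regular domain and then invokes Theorem~\ref{embedding_2} with $q=p$ and $\alpha=\infty$, without supplying any of the verification you sketch. Your flagging of the ball-comparison subtlety in the Jacobian bound actually goes beyond what the paper itself addresses.
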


\vskip 0.1cm

Alexander Menovschikov; Department of Mathematics, HSE University, Moscow, Russia
 
\emph{E-mail address:} \email{menovschikovmath@gmail.com} \\

Alexander Ukhlov; Department of Mathematics, Ben-Gurion University of the Negev, P.O.Box 653, Beer Sheva, 8410501, Israel 
							
\emph{E-mail address:} \email{ukhlov@math.bgu.ac.il}

\end{document}